\documentclass[12pt]{article}

\usepackage{setspace}

\usepackage{amsthm, amssymb, amstext}
\usepackage[fleqn]{amsmath}
\usepackage{latexsym}
\usepackage[dvips]{graphicx}
\usepackage{comment}
\usepackage{hyperref}
\usepackage{mathtools}
\usepackage{enumerate} 

\usepackage{todonotes}
\usepackage{comment}

\newtheorem{theorem}{Theorem}
\newtheorem{lemma}[theorem]{Lemma}
\newtheorem{observation}[theorem]{Observation}

\newtheorem{conjecture}[theorem]{Conjecture}

\usepackage{amsthm}

\title{A Thomassen-type method for \\planar graph recoloring}
\author{Zden\v{e}k Dvo\v{r}\'ak\thanks{Computer Science Institute of Charles University, Prague, Czech Republic, email: \texttt{rakdver@iuuk.mff.cuni.cz} }\and
Carl Feghali\thanks{Computer Science Institute of Charles University, Prague, Czech Republic, email: \texttt{feghali.carl@gmail.com} }}

\date{}

\begin{document}
\maketitle

\begin{abstract}
The reconfiguration graph $R_k(G)$ for the $k$-colorings of a graph $G$ has as vertices all
possible $k$-colorings of $G$ and two colorings are adjacent if they differ in the color of exactly one vertex. 
We use a list coloring technique inspired by results of Thomassen to prove that for a planar graph $G$ with $n$
vertices, $R_{10}(G)$ has diameter at most $8n$, and if $G$ is triangle-free, then $R_7(G)$ has diameter at most~$7n$.
 \end{abstract}
 
\section{Introduction}

How far are the proper colorings of a graph from one another?  More precisely, how many simple operations of some
kind are necessary to change one coloring to any other coloring, with all intermediate colorings also being proper?
Questions of this kind arise for example in
the statistical physics (where the colorings represent physical states of the studied system and the operations correspond
to valid transitions between the states), when sampling a random coloring of a graph, or when estimating the number of colorings of a graph.
We refer the reader to the surveys by van den Heuvel~\cite{He13} and by Nishimura~\cite{nishimura} for more
background.

In this paper, we consider the reconfiguration model 
(also known as \emph{Glauber dynamics} in statistical physics), where the allowed operation is changing the color of
a single vertex at a time.  Let us give a precise definition.
Let $G$ be a graph, and let $k$ be a non-negative integer. 
A (proper) \emph{$k$-coloring} of $G$ is a function $\varphi: V(G) \rightarrow \{1, \dots, k\}$ such that $\varphi(u) \neq \varphi(v)$ whenever $uv \in E(G)$. 
The reconfiguration graph $R_k(G)$ for the $k$-colorings of $G$ has as vertices all $k$-colorings of $G$,
with two colorings adjacent if they differ in the color of exactly one vertex.  That is, two $k$-colorings $\varphi_1$ and $\varphi_2$ are joined by a path
in $R_k(G)$ if and only if we can transform $\varphi_1$ into $\varphi_2$ by
recoloring vertices one by one, always keeping the coloring proper. 

In this setting, one can ask a number of questions.  Under which conditions is $R_k(G)$ connected?
If it is, is its diameter polynomial in $|V(G)|$?  Again, we refer the reader to the aforementioned surveys
for more details.  In this paper, we ask for conditions ensuring that the diameter of $R_k(G)$ is at most
linear in $|V(G)|$.  For example, in terms of degeneracy, we have the following result
(recall that a graph is \emph{$d$-degenerate} if every subgraph of the graph contains a vertex of degree at most $d$).
\begin{theorem}[{Bousquet and Perarnau~\cite[Theorem~1]{bousquet11}}]\label{thm:bousquet}
If $G$ is a $d$-degenerate graph on $n$ vertices and $c\ge 2d+2$, then $R_c(G)$ has diameter at most $(d+1)n$. 
\end{theorem}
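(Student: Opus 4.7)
The plan is to prove the bound by a greedy recoloring strategy that processes vertices one at a time in a degeneracy ordering and spends at most $d+1$ moves per vertex. Since $G$ is $d$-degenerate, I fix an ordering $v_1, v_2, \ldots, v_n$ such that each $v_i$ has at most $d$ neighbors in $\{v_1, \ldots, v_{i-1}\}$ (obtained by iteratively removing a minimum-degree vertex and appending it to the back of the ordering). Given two $c$-colorings $\alpha$ and $\beta$, I process the vertices in reverse order $v_n, v_{n-1}, \ldots, v_1$, maintaining the invariant that, just before processing $v_i$, every $v_j$ with $j > i$ already has color $\beta(v_j)$.

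When processing $v_i$, its higher-indexed (already processed) neighbors cannot block the target color $\beta(v_i)$, since by properness of $\beta$ their colors differ from $\beta(v_i)$. The only potential obstruction is that one of $v_i$'s at most $d$ lower-indexed neighbors currently carries color $\beta(v_i)$. For each such blocking neighbor $v_j$, I spend one preparatory move to recolor $v_j$ to a color distinct from both $\beta(v_i)$ and the current colors on $v_j$'s neighbors, and then a final move sets $v_i$ to $\beta(v_i)$. This amounts to at most $d+1$ moves per vertex, and summing over $i$ gives the $(d+1)n$ bound.

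The hard part is guaranteeing that each preparatory move is executable: a blocking neighbor $v_j$ may have high degree in $G$, and its neighbors in the current coloring could in principle exhaust most of the $c$ available colors. I would address this by strengthening the invariant so that intermediate colorings remain controlled, arranging in particular that at the moment $v_j$ must be moved its neighborhood uses at most $2d+1$ distinct colors, so that the slack provided by $c \geq 2d+2$ always leaves a safe alternative. Establishing this bound and checking that the preparatory choices remain compatible with subsequent processing steps is the technical crux of the argument, and it is precisely here that the constant $2d+2$ enters.
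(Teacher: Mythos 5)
The paper cites this as Bousquet and Perarnau's Theorem~1 and does not reprove it, so I can only judge your argument on its own merits, and it has a genuine gap that the outlined fix does not close. You correctly flag the crux --- showing each preparatory move is available --- but you leave it unresolved, and the invariant you propose (``at the moment $v_j$ must be moved its neighborhood uses at most $2d+1$ distinct colors'') would not suffice even if it were established: at that moment $v_j$ carries the color $\beta(v_i)$, which by properness is \emph{absent} from $N(v_j)$, so together with the (up to) $2d+1$ colors on $N(v_j)$ you must avoid $2d+2$ colors, i.e.\ the entire palette when $c = 2d+2$.

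Worse, the algorithm as written already deadlocks on a tiny instance, so no clever choice within the preparatory moves can rescue it. Take $G = K_{1,4}$, $d = 1$, $c = 4$, center $w_1$, leaves $w_2,\ldots,w_5$; your ordering construction yields $v_i = w_i$. Let $\alpha = (1,2,2,2,2)$ and $\beta = (2,1,3,4,1)$, listed as $(w_1,\ldots,w_5)$. Processing $w_5$ (target $1$) forces a preparatory move on $w_1$ away from $\{1,2\}$, so $w_1$ becomes $3$ or $4$; in either branch, after also processing $w_4$ (target $4$) one reaches $w_3$ (target $3$) with $w_1$ at color $3$ and its neighbors occupying exactly $\{1,2,4\}$, leaving no legal color for the preparatory move. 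A valid reconfiguration does exist (e.g.\ $w_3\to 3$, $w_4\to 4$, $w_2\to 3$, $w_5\to 4$, $w_1\to 2$, $w_2\to 1$, $w_5\to 1$), but it recolors some leaves \emph{twice}, which your scheme of permanently fixing each processed vertex at its $\beta$-color forbids. The structural issue is that a high-degree vertex sits early in the degeneracy ordering and is therefore processed last; as its neighbors freeze one by one at their $\beta$-colors it gets squeezed with nowhere to go. Any correct proof needs either a mechanism allowing already-processed vertices to move again (e.g.\ a multi-pass scheme, or an induction on $n$ that tracks per-vertex recoloring budgets) or a substantially different strategy.
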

Let $P_{d,n}$ be the graph consisting of a path with $n-d+1$ vertices and of a clique of $d-1$ vertices adjacent
to all the vertices of the path.  Note that $P_{d,n}$ is $d$-degenerate.
On the negative side, Bonamy et al.~\cite{BJLPP14} proved that for every fixed $d$, $R_{d+2}(P_{d,n})$ has diameter
quadratic in $n$.  Thus, if we define $c_d$ as the minimum number of colors such that for every $d$-degenerate graph $G$,
the reconfiguration graph $R_{c_d}(G)$ has diameter $O(|V(G)|)$, we have $d+3\le c_d\le 2d+2$.

The upper bound can be improved in the more restricted setting of planar graphs.
Planar graphs are 5-degenerate, and thus Theorem~\ref{thm:bousquet} implies 12 colors suffice to ensure linear diameter
of their reconfiguration graphs. 
The first improvement comes from the following general bound exploiting list colorability
(the bound is implicit in~\cite{feghali10}; for completeness, we give a proof in the next section).
A \emph{list assignment} $L$ for a graph $G$ is a function that to each vertex $v\in V(G)$
assigns a set $L(v)$ of colors.  An \emph{$L$-coloring} of $G$ is a proper
coloring $\varphi$ of $G$ such that $\varphi(v) \in L(v)$ for each $v \in V(G)$.
\begin{lemma}\label{lemma:degchos}
Let $c$ and $d$ be positive integers.  Suppose $G$ is a $d$-degenerate graph on $n$ vertices.
If $G$ is $L$-colorable from every assignment $L$ of lists of size $c$, then for every independent
set $I$ in $G$, the diameter of $R_{c+d+1}(G)$ is at most $2(n+|I|)$ plus the diameter of $R_{c+d}(G-I)$.
\end{lemma}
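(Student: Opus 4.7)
The plan is to construct, for each of $\varphi_1,\varphi_2\in R_{c+d+1}(G)$, a ``target'' coloring $\varphi_i^*$ that assigns color $c+d+1$ to every vertex of $I$ and colors in $\{1,\ldots,c+d\}$ to every vertex of $G-I$. I then reach $\varphi_1^*$ from $\varphi_1$ in at most $n+|I|$ recolorings, bridge $\varphi_1^*$ and $\varphi_2^*$ by a sequence that only touches $G-I$ (keeping $I$ frozen at $c+d+1$), and by symmetry reach $\varphi_2$ from $\varphi_2^*$ in another $n+|I|$ steps, yielding the desired bound.

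To construct $\varphi_1^*$, I fix a degeneracy ordering $v_1,\ldots,v_n$ of $G$ in which every $v_i$ has at most $d$ neighbors among $v_{i+1},\ldots,v_n$, and set $L(v_i)=\{1,\ldots,c+d\}\setminus\{\varphi_1(v_j):j>i,\;v_iv_j\in E(G)\}$. Each list has at least $c$ colors, so the hypothesis that $G$ is $c$-choosable furnishes a proper $L$-coloring $\varphi_1^\dagger$ of $G$. Now $\varphi_1^*$ is defined to agree with $\varphi_1^\dagger$ on $G-I$ and to assign $c+d+1$ to every vertex of $I$; this is proper since $I$ is independent and $\varphi_1^\dagger$ never uses color $c+d+1$. (Repeat the construction verbatim to obtain $\varphi_2^*$ from $\varphi_2$.)

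The walk from $\varphi_1$ to $\varphi_1^*$ is built in two passes. In the first pass I process $v_1,\ldots,v_n$ in order and recolor each $v_i$ from $\varphi_1(v_i)$ to $\varphi_1^\dagger(v_i)$ (skipping it if the two agree). Such a step is legal at the moment it is performed: an earlier neighbor $v_j$ ($j<i$) now carries $\varphi_1^\dagger(v_j)\neq\varphi_1^\dagger(v_i)$ by properness of $\varphi_1^\dagger$, while a later neighbor $v_j$ ($j>i$) still carries $\varphi_1(v_j)\neq\varphi_1^\dagger(v_i)$ because this color was explicitly removed from $L(v_i)$. The pass uses at most $n$ steps and terminates at $\varphi_1^\dagger$. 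In the second pass I recolor each $v\in I$ from $\varphi_1^\dagger(v)$ to $c+d+1$; this is legal because $I$ is independent and the neighbors of $v$, all of which lie in $G-I$, currently carry colors in $\{1,\ldots,c+d\}$. This pass uses at most $|I|$ steps, giving $n+|I|$ in total.

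To connect $\varphi_1^*$ and $\varphi_2^*$, note that $\varphi_1^*|_{G-I}$ and $\varphi_2^*|_{G-I}$ are two $(c+d)$-colorings of $G-I$, hence joined by a walk of length at most the diameter of $R_{c+d}(G-I)$. Lifting this walk to $G$ while keeping every vertex of $I$ fixed at color $c+d+1$ preserves properness, since each intermediate color used on $G-I$ lies in $\{1,\ldots,c+d\}$ and thus never conflicts with $c+d+1$. The main subtlety lies in orchestrating the first pass: choosability and the degeneracy ordering must combine so that $\varphi_1^\dagger$ is simultaneously proper on $G$ and avoids the unchanged colors of the at most $d$ later neighbors of each vertex, which is exactly what the list construction guarantees.
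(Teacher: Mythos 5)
Your proposal is correct and follows essentially the same route as the paper: the paper proves the bound $n$ for reaching a coloring that avoids color $c+d+1$ using the identical degeneracy-ordering list construction, and then invokes Lemma~\ref{lemma:freeup} (which recolors $I$ to the freed-up color and walks through $R_{c+d}(G-I)$) to conclude, whereas you simply inline that lemma's proof. The content and the key step (choosability against the lists pruned by later neighbors' colors, giving a legal $n$-step pass) are the same.
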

Note that planar graphs are colorable from any assignment of lists of size five~\cite{thomassen1}.
Moreover, every $d$-degenerate graph $G$ has an independent set $I$ such that $G-I$ is $(d-1)$-degenerate.
Thus, Lemma~\ref{lemma:degchos} together with Theorem~\ref{thm:bousquet} imply that $R_{11}(G)$
has linear diameter for every planar graph $G$.  We have recently decreased this bound to 10.
\begin{theorem}[Dvo\v{r}\'ak and Feghali~\cite{DF}]\label{thm:main}
If $G$ is a planar graph on $n$ vertices, then $R_{10}(G)$ has diameter at most $8n$. 
\end{theorem}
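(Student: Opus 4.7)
The plan is to prove Theorem~\ref{thm:main} by a direct Thomassen-style induction on $|V(G)|$, with a strengthened statement about plane graphs carrying a partially precolored outer face. I would state a lemma of the following shape: for a plane graph $G$ with outer face $C$, a precolored edge $x_1x_2$ on $C$, and a list assignment $L$ satisfying the usual Thomassen-style size constraints---lists of size one on $\{x_1, x_2\}$, at least three on the remaining vertices of $C$, and at least five on the interior---any two $L$-colorings of $G$ extending the precoloring on $x_1x_2$ are connected by a path in the $L$-reconfiguration graph of length at most a small linear function of $|V(G)|$. Theorem~\ref{thm:main} then follows by fixing any edge $x_1x_2$ on the outer face of $G$, performing a short preliminary recoloring to align the two given $10$-colorings on $\{x_1, x_2\}$, and applying the lemma with full lists of size $10$ throughout.

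The induction follows Thomassen's case distinction. If the outer face has a chord $uv$, the graph splits into two plane subgraphs sharing $uv$; each side is handled inductively, and the two reconfiguration paths are chained through a common intermediate coloring that aligns the two endpoint colorings along $uv$. If the outer face is a chordless cycle, one chooses an outer-face vertex $v$ of small degree, removes $v$, and carefully adjusts its neighbors' lists---shrinking interior neighbors' lists from size five to size three by removing two appropriately chosen colors from $L(v)$---so that the inductive hypothesis applies to $G - v$; the vertex $v$ is then recolored at the end to match its target value in each of the two original colorings.

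Two separate obstacles must be overcome. First, one must simultaneously track \emph{both} endpoint colorings $\varphi_1$ and $\varphi_2$ through every inductive reduction. Each time a list shrinks from five to three, the two colors removed must avoid $\{\varphi_1(w), \varphi_2(w)\}$ for every affected interior neighbor $w$; since three choices per neighbor are excluded, having five colors available barely suffices, and forces a delicate choice of which colors to drop. Second, the multiplicative constant $8$ in the final bound demands tight accounting: every outer vertex removed in an inductive step is charged at most a small constant number of recolorings, and the chord-splitting case must be arranged so that the two subproblems' costs add rather than multiply.

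I expect the first obstacle to be the harder of the two: arranging the list-shrinkage to be compatible with both $\varphi_1$ and $\varphi_2$ is precisely where the proof departs from Thomassen's single-coloring setting, and a misstep here would force an extra color into the lists, losing the improvement over the $R_{11}$ bound. Handling this likely requires a finer invariant on the outer cycle---for example, permitting one of the precolored vertices to carry a small set of allowed colors rather than a single color, reflecting the freedom between $\varphi_1$ and $\varphi_2$---and verifying that this relaxation is still compatible with Thomassen's reduction moves.
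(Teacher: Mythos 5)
Your proposal is a fundamentally different approach from the paper, and it has a genuine gap that the paper itself explicitly discusses and works hard to circumvent. The paper does \emph{not} directly connect two arbitrary colorings by Thomassen-style induction. Instead, it proves a list-recoloring result (Theorem~\ref{thm:col}) showing that any $10$-coloring can be pushed, by recoloring each vertex at most twice, to a coloring that avoids an arbitrary prescribed color $f(v)$ at each vertex. It then frees up one color globally (Lemma~\ref{lemma:freeup}), removes an independent set $I$ making $G-I$ $3$-degenerate (a theorem of Thomassen), and falls back on the Bousquet--Perarnau degeneracy bound for $R_9(G-I)$. The constant $8$ arises from $2(2n+|I|)+4(n-|I|)\le 8n$, not from per-vertex charging in a Thomassen-style peel.

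The core problem with your plan is the list sizes. The thresholds $1$, $3$, $5$ are Thomassen's \emph{choosability} thresholds; they certify the existence of an $L$-coloring, not the connectivity or linear diameter of the $L$-reconfiguration graph, and the lemma you state is false at those sizes (for example, with boundary lists of size $3$, a boundary vertex adjacent to two colored neighbors may be frozen, and the reconfiguration graph easily disconnects). Reconfiguration requires substantially larger lists because intermediate colorings also constrain the moves. The paper's working thresholds are roughly $6$ on the boundary and $10$ interior in a first phase, and $7$/$9$ in a second phase, reflecting the fact that when a boundary vertex $v$ is peeled, its interior neighbors must have \emph{four} colors deleted (the current and target color of each of up to two boundary neighbors), not two.

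Moreover, the specific obstacle you flag at the end is precisely the one the paper identifies and calls a ``gap in the above argument'': when you peel a boundary vertex $v$, you must also delete the current color $\varphi(v)$ from the list of $v$'s neighbor $u$ along the boundary, and you have \emph{no control} over $\varphi(v)$. Your suggested remedy---giving the precolored vertex a small set of admissible colors rather than a single one---is exactly Thomassen's device for choosability, and the paper explicitly notes this does not help here because the uncontrolled color $\varphi(v)$ still has to come out of $u$'s already-small list. The paper's actual resolution is the two-phase ``trajectory'' framework: pick the final colors of the entire outer cycle at once, recolor the cycle only in phase one, and keep it fixed in phase two, so that interior vertices lose four colors in phase one but only two in phase two. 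That mechanism has no analogue in your single-pass peel, and without it the approach stalls at the very point you anticipate being hard.
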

The proof in~\cite{DF} is based on discharging and reducible configurations, and is quite long and involved.
In this paper, we present a shorter proof based on a different idea.  Furthermore, we apply the same
idea to triangle-free planar graphs.  These graphs are 3-degenerate, and thus by Theorem~\ref{thm:bousquet}, $R_8(G)$
has linear diameter for every triangle-free planar graph $G$.  We decrease the bound on the number of colors to 7.
\begin{theorem}\label{thm:trmain}
If $G$ is a planar triangle-free graph on $n$ vertices, then $R_7(G)$ has diameter at most $7n$. 
\end{theorem}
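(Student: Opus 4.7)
The plan is to mimic Thomassen's inductive approach to $3$-choosability of triangle-free planar graphs, lifted to the reconfiguration setting. Two facts drive the argument. First, triangle-free planar graphs are $3$-degenerate, so every induced subgraph contains a vertex $v$ of degree at most $3$. Second, $7 = 2\cdot 3 + 1$, which guarantees enough slack: given any two $7$-colorings $\varphi_1,\varphi_2$ and any vertex $v$ of degree at most $3$, the set of colors forbidden at $v$ by its neighbors in $\varphi_1$ or $\varphi_2$ has size at most $2\cdot 3 = 6$, so at least one ``common'' color $\alpha$ is simultaneously legal for $v$ in both $\varphi_1$ and $\varphi_2$.

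To carry out the induction on $|V(G)|$, I would strengthen the statement to a list-reconfiguration version, roughly: for every triangle-free planar graph $G$ on $n$ vertices with a designated outer face $F$, every list assignment $L$ with $|L(v)| \ge 7$ for internal $v$ and a weaker condition (for instance $|L(v)| \ge 4$) for $v \in V(F)$, and any two proper $L$-colorings $\varphi_1, \varphi_2$, the distance between $\varphi_1$ and $\varphi_2$ in the reconfiguration graph of $L$-colorings of $G$ is at most $7n$. Theorem~\ref{thm:trmain} follows by specializing to $L(v) = \{1,\ldots,7\}$ for all $v$. In the inductive step, I would locate a reducible vertex $v$ of degree at most $3$, pick a common color $\alpha \in L(v)$ avoiding $\varphi_1(u)$ and $\varphi_2(u)$ for all $u \in N(v)$ (possible because at most $6$ colors are excluded and $|L(v)| \ge 7$), recolor $v$ to $\alpha$ at both endpoints of the reconfiguration sequence (one step each), and then use induction on $G - v$ with the shrunken list $L'(u) = L(u)\setminus\{\alpha\}$ for $u \in N(v)$ and $L'(u) = L(u)$ otherwise. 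The restrictions $\varphi_i|_{G-v}$ are proper $L'$-colorings by the choice of $\alpha$, so by induction we can bridge them in at most $7(n-1)$ steps, giving a total of at most $2 + 7(n-1) < 7n$ steps.

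The main obstacle is ensuring that the inductive hypothesis actually applies to $G - v$ with the shrunken list $L'$: the neighbors of $v$ drop from $|L|\ge 7$ to $|L'|\ge 6$, below the ``internal'' threshold in the strengthened statement. This is precisely where Thomassen's device of a privileged outer face becomes essential: $v$ must be chosen near $F$ so that its neighbors can be absorbed into the new outer face of $G - v$ and reclassified under the weaker boundary condition. Getting the strengthened statement exactly right, and carrying out the case analysis needed to locate such a reducible configuration in every eligible $(G, L)$---handling chords of $F$, neighbors of $v$ already on $F$, short separating cycles around $v$, and the like---is the delicate heart of the argument, in direct analogy with Thomassen's case analysis in his proofs of $3$-choosability of triangle-free planar graphs and $5$-choosability of planar graphs. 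The extra cushion provided by the $7$-th color is exactly what allows this classical case analysis to be redone at the level of reconfiguration paths.
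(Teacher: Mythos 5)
Your proposal takes a genuinely different route from the paper's, but the inductive step as sketched has a concrete gap that cannot be repaired by case analysis alone.

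The paper does not attempt to connect two arbitrary colorings $\varphi_1,\varphi_2$ by a direct Thomassen-style induction. Instead it first reduces, via Lemma~\ref{lemma:freeup}, to the one-sided problem of recoloring a single coloring so as to avoid a prescribed forbidden color $f(v)$ at each vertex; the freed color is then parked on an independent set and the remainder is handled by the Bousquet--Perarnau bound (Theorem~\ref{thm:bousquet}). The freeing-up step (Theorem~\ref{thm:col}) is proved by a \emph{two-phase} recoloring process formalized as an $(L_1,L_2)$-trajectory, in which the boundary cycle $K$ is assigned one new coloring that remains \emph{stable} through the second phase (Lemma~\ref{lemma:stable-g5}). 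Because of this stability, an interior vertex loses two colors from its phase-one list but only one from its phase-two list, and that asymmetry is what makes the numerology close with $|L_1(v)|\ge 7,\ |L_2(v)|\ge 6$ internally and $|L_i(v)|\ge 5$ on the boundary.

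Your single-phase induction stalls at a precise point. To pick $\alpha\in L(v)$ avoiding the up to $2\deg(v)\le 6$ colors $\{\varphi_1(u),\varphi_2(u):u\in N(v)\}$, you need $|L(v)|\ge 7$. But a vertex whose deletion moves neighbors onto the outer face must itself lie on the outer face, where your invariant permits $|L(v)|$ as small as $4$. Already a boundary vertex of degree~$2$ with $L(v)=\{1,2,3,4\}$ fails: $\varphi_1,\varphi_2$ could together assign colors $1,2$ to one neighbor and $3,4$ to the other, exhausting $L(v)$. Conversely, if $v$ is chosen internal so that $|L(v)|\ge 7$, deleting $v$ does not change the outer face; $v$'s neighbors remain internal with lists shrunk to size~$6$, below your internal threshold~$7$. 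Neither branch closes the induction, and the paper's Section~2 describes exactly this breakdown for a one-phase approach---it is what forces the two-phase trajectory machinery and the decision to recolor the \emph{whole} outer cycle at once via a stable trajectory rather than one vertex at a time. As a sanity check, note also that your recursion $f(n)\le 2+f(n-1)$ would actually yield diameter about $2n$, much better than the $7n$ the paper obtains; that such a short argument would beat the paper's considerably more intricate one is itself a signal that a step is missing.
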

It is natural to ask what happens for graphs of girth at least five.  Since planar graphs of girth at least five can be colored
from lists of size three~\cite{thomassen3}, Lemma~\ref{lemma:degchos} together with Theorem~\ref{thm:bousquet} implies $7$ colors suffice,
by an argument substantially simpler than the one used to establish Theorem~\ref{thm:trmain}.  We believe the bound can be improved
to $6$.
\begin{conjecture}\label{conj:girth5}
If $G$ is a planar graph of girth at least five on $n$ vertices, then $R_6(G)$ has diameter $O(n)$.
\end{conjecture}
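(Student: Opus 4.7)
The plan is to extend the Thomassen-type method that underlies Theorems~\ref{thm:main} and~\ref{thm:trmain} to the girth-five setting. By analogy with Thomassen's proof that planar graphs of girth at least five are $3$-choosable, I would prove by induction on $|V(G)|$ a strengthened statement along the following lines: for any plane graph $G$ of girth at least five with outer face bounded by a cycle $C$, any list assignment $L$ giving lists of size $6$ to interior vertices and smaller lists of controlled size to vertices on $C$, any precoloring $\psi_0$ of a small vertex set $S\subseteq C$ consistent with $L$, and any two $L$-colorings $\varphi_1,\varphi_2$ of $G$ that both extend $\psi_0$, there is a reconfiguration sequence from $\varphi_1$ to $\varphi_2$ of length $O(|V(G)\setminus S|)$ that fixes $S$ throughout. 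Applying this with $S=\emptyset$ and lists of size $6$ on every vertex would then yield the conjecture.

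The inductive step would identify a reducible configuration adapted from Thomassen's proof, such as a vertex of degree two on $C$, a chord of $C$, or a short path near $S$ with a convenient neighborhood. For each configuration, one constructs a smaller plane graph $G'$ and a list assignment $L'$ satisfying the hypotheses; argues that $\varphi_1,\varphi_2$ induce $L'$-colorings $\varphi'_1,\varphi'_2$ of $G'$ extending an appropriate precoloring $\psi'_0$; applies the inductive hypothesis to get a reconfiguration sequence in $G'$; and lifts this sequence back to $G$ at the cost of a bounded number of additional recoloring steps per reduced vertex. Summing these costs along the chain of reductions yields the linear bound, in the spirit of the $O(n)$ accounting behind Theorems~\ref{thm:main} and~\ref{thm:trmain}.

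The main obstacle is the tightness of the bound: six colors is only one more than the degeneracy of girth-five planar graphs and just three more than their list-chromatic number, leaving very little slack. First, the strengthened statement must be powerful enough to propagate through every reducible configuration yet weak enough to hold on the original graph; choosing the precise list sizes on $C$ (analogous to Thomassen's size-$2$ or size-$3$ boundary lists) is delicate because they must simultaneously govern the existence of a coloring and the freedom to reroute between two colorings. Second, unlike the pure choosability setting, one must handle pairs of colorings: when a reducible vertex is removed, $\varphi_1$ and $\varphi_2$ may assign very different colors in its neighborhood, so one typically has to perform a preliminary local recoloring to align them before invoking induction. Guaranteeing that such local recolorings exist within the list constraints and cost only $O(1)$ each is where I expect the bulk of the technical work to lie.
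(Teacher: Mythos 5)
The statement you are addressing is Conjecture~\ref{conj:girth5}, which the paper leaves \emph{open}: the authors explicitly write that they ``suspect a variation on our idea can be used to prove Conjecture~\ref{conj:girth5}, but a number of complications arise in this case.'' There is no proof in the paper to compare against, and what you have written is likewise a plan rather than a proof --- you name the key obstacles yourself (choosing boundary list sizes, aligning two colorings near a deleted vertex) and do not resolve them. So the proposal, as written, contains a genuine gap: it does not establish the conjecture, and its hardest steps are the ones left as ``where I expect the bulk of the technical work to lie.''

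It is also worth flagging a structural point that your sketch glosses over. Your plan is a one-shot induction that directly reconfigures $\varphi_1$ to $\varphi_2$ while holding a precolored boundary set $S$ fixed. The paper's actual machinery for the proved cases is different: it never reconfigures between two arbitrary colorings by induction, but instead (i) shows any coloring can reach one avoiding a designated color via a \emph{two-phase} trajectory with a carefully relaxed first-phase list (Theorem~\ref{thm:col}), and then (ii) finishes with Lemma~\ref{lemma:freeup} and the degeneracy bound of Theorem~\ref{thm:bousquet}. Trying to rerun that pipeline with $k=6$ on girth-five planar graphs breaks at step (ii): a girth-five planar graph is $3$-degenerate, so removing an independent set $I$ gives a $2$-degenerate $G-I$, and you would then need a linear bound on the diameter of $R_5(G-I)$; but Theorem~\ref{thm:bousquet} requires $c\ge 2d+2 = 6$ colors for a $2$-degenerate graph, so $R_5$ is out of its reach. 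One would need either an independent set $I$ with $G-I$ a forest (so that $c=5\ge 2\cdot 1+2$ suffices), or an entirely different finishing argument, or indeed a direct-reconfiguration scheme as you sketch --- but in any case the concrete reducibility and accounting have to be carried out, and neither you nor the paper does so. Until the reducible configurations and the local $O(1)$-cost realignments are nailed down under the tight $6$-color budget, this remains a conjecture.
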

We suspect a variation on our idea can be used to prove Conjecture~\ref{conj:girth5}, but a number of complications arise in this case.
Note that for planar graphs of girth at least six, this bound follows by $2$-degeneracy and Theorem~\ref{thm:bousquet}.

\section{The proof idea}

Due to the following standard lemma, whose proof we include for completeness, it suffices to free up one of the colors.
\begin{lemma}\label{lemma:freeup}
Let $k$ and $m$ be positive integers and let $G$ be a graph.  Suppose that for every $k$-coloring $\alpha$
of $G$, there exists a $k$-coloring $\alpha'$ such that $\alpha'$ only uses colors $\{1,\ldots,k-1\}$
and the distance from $\alpha$ to $\alpha'$ in $R_k(G)$ is at most $m$.  Then for every independent set $I$ in $G$,
the diameter of $R_k(G)$ is at most $2(m+|I|)$ plus the diameter of $R_{k-1}(G-I)$.
\end{lemma}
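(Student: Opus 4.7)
The plan is to use the hypothesis to ``free up'' the color $k$, then transfer $I$ entirely onto color $k$, and finally simulate a shortest recoloring sequence in $R_{k-1}(G-I)$ inside $R_k(G)$.

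Fix any two $k$-colorings $\alpha_1,\alpha_2$ of $G$. By the hypothesis, there exist $k$-colorings $\alpha_1',\alpha_2'$ of $G$ using only the colors $\{1,\ldots,k-1\}$ with the distance from $\alpha_i$ to $\alpha_i'$ in $R_k(G)$ at most $m$ for $i\in\{1,2\}$. Since $I$ is independent and no vertex of $G$ is colored $k$ under $\alpha_i'$, one can recolor the vertices of $I$ to color $k$ one at a time; each such step keeps the coloring proper because the new color $k$ is absent on the neighborhood of the recolored vertex. This produces a $k$-coloring $\beta_i$ at distance at most $|I|$ from $\alpha_i'$ in $R_k(G)$ in which every vertex of $I$ has color $k$ and every vertex of $V(G)\setminus I$ has a color in $\{1,\ldots,k-1\}$.

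The restriction $\gamma_i=\beta_i|_{V(G)\setminus I}$ is then a proper $(k-1)$-coloring of $G-I$. By the definition of the diameter, there is a walk from $\gamma_1$ to $\gamma_2$ in $R_{k-1}(G-I)$ of length at most the diameter of $R_{k-1}(G-I)$. I will lift this walk step by step to $R_k(G)$: each step recolors some vertex $v\in V(G)\setminus I$ from one color in $\{1,\ldots,k-1\}$ to another color $c\in\{1,\ldots,k-1\}$, and the resulting coloring of $G$ is still proper, since $c$ differs from the color of every neighbor of $v$ in $G-I$ (by validity of the step in $R_{k-1}(G-I)$) and also differs from $k$, the color of every neighbor of $v$ lying in $I$. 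Hence $\beta_1$ and $\beta_2$ are at distance at most the diameter of $R_{k-1}(G-I)$ in $R_k(G)$.

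Finally, reversing the first-stage recoloring (on vertices of $I$) moves $\beta_2$ back to $\alpha_2'$ in $|I|$ steps, and the hypothesis provides a walk of length at most $m$ from $\alpha_2'$ to $\alpha_2$. Concatenating the five pieces yields a walk from $\alpha_1$ to $\alpha_2$ of length at most $m+|I|+\mathrm{diam}(R_{k-1}(G-I))+|I|+m=2(m+|I|)+\mathrm{diam}(R_{k-1}(G-I))$, which is exactly the announced bound. The argument is essentially an assembly, and the only thing one has to verify is that no intermediate coloring ever produces a conflict; this follows in each phase from the separation of colors between $I$ (color $k$) and $V(G)\setminus I$ (colors in $\{1,\ldots,k-1\}$), so there is no serious obstacle.
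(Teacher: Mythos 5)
Your proof is correct and follows essentially the same route as the paper's: move to a $k$-coloring avoiding color $k$ (cost $m$), push $I$ onto color $k$ (cost $|I|$), simulate a geodesic of $R_{k-1}(G-I)$ inside $R_k(G)$, and undo the first two phases on the other side. You simply spell out the lifting and the endpoints more explicitly than the paper does.
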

\begin{proof}
Consider any $\alpha,\beta\in V(R_k(G))$.  By the assumptions, there exist colorings $\alpha',\beta'\in V(R_k(G))$ at distance at most
$m$ from $\alpha$ and $\beta$, respectively, not using the color $k$. To bound the distance between $\alpha'$ and $\beta'$,
first recolor all vertices of $I$ to the color $k$ in both colorings, then recolor the vertices of $V(G)\setminus I$
according to the path from $\alpha'\restriction V(G-I)$ to $\beta'\restriction V(G-I)$ in $R_{k-1}(G-I)$.
The recolorings according to this path do not use the color $k$, and thus do not conflict with the coloring of $I$.
\end{proof}

As an example application, let us prove Lemma~\ref{lemma:degchos}.

\begin{proof}[Proof of Lemma~\ref{lemma:degchos}]
Consider any $(c+d+1)$-coloring $\alpha$ of $G$.
Since $G$ is $d$-degenerate, there exists an ordering $v_1$, \ldots, $v_n$ of its vertices
such that for each $i$, $v_i$ has at most $d$ neighbors $v_j$ such that $j>i$.  For $i=1,\ldots, n$,
let
$$L(v_i)=\{1,\ldots,c+d\}\setminus \{\alpha(v_j):v_iv_j\in E(G),j>i\}.$$
Note that $|L(v_i)|\ge c$ for each $i$, and by the assumptions, $G$ has an $L$-coloring $\alpha'$.
For $i=1, \ldots, n$ in order, recolor $v_i$ to $\alpha'(v_i)$.  The color assigned to $v_i$ does not
conflict with the neighbors $v_j$ with $j<i$ since $\alpha'$ is a proper coloring, and with those with $j>i$
by the choice of the list assignment $L$.  Hence, the coloring $\alpha'$, which does not use the color $c+d+1$,
is at distance at most $n$ from $\alpha$ in $R_{c+d+1}$.  Lemma~\ref{lemma:degchos} thus follows from Lemma~\ref{lemma:freeup}.
\end{proof}

For our improved bounds, we show how to eliminate one of the colors in the following more general list coloring setting.
\begin{theorem}\label{thm:col}
Let $G$ be a planar graph, let $L$ be a list assignment for $G$,
and let $\varphi$ be an $L$-coloring of $G$.  Let $f$ be an arbitrary function assigning a color to each vertex of $G$.
If either
\begin{itemize}
\item[(a)] $|L(v)|\ge 10$ for every $v\in V(G)$, or
\item[(b)] $G$ is triangle-free and $|L(v)|\ge 7$ for every $v\in V(G)$,
\end{itemize}
then there exists an $L$-coloring $\varphi'$ of $G$ such that $\varphi'(v)\neq f(v)$ for each $v\in V(G)$ and
$\varphi$ can be transformed to $\varphi'$ by recoloring vertices one by one so that
all intermediate colorings are proper $L$-colorings and each vertex is recolored at most twice.
\end{theorem}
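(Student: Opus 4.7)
The plan is to prove Theorem~\ref{thm:col} by induction on $|V(G)|$, adapting Thomassen's inductive template for $5$-choosability of planar graphs and $3$-choosability of planar triangle-free graphs. I would triangulate $G$, fix a plane embedding, and strengthen the statement so that it applies to a plane near-triangulation with outer cycle $C$, a pre-fixed edge $xy\subseteq C$ whose endpoints are never recolored (so $\varphi'(x)=\varphi(x)$ and $\varphi'(y)=\varphi(y)$), and list-size bounds relaxed on the non-pre-fixed vertices of $C$ while interior vertices retain $|L(v)|\ge 10$ (resp.\ $\ge 7$). Theorem~\ref{thm:col} itself reduces to this strengthened form by fixing an outer face and anchoring it with an auxiliary pre-fixed edge.

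The inductive step mirrors Thomassen's case analysis. In the \emph{chord case}, a chord $uv$ of $C$ decomposes $G$ into near-triangulations $G_1\supseteq\{x,y\}$ and $G_2$; applying the inductive hypothesis to $G_1$ yields $\varphi'(u),\varphi'(v)$ together with a recoloring sequence on $G_1$, after which a second call on $G_2$ with $uv$ pre-fixed completes the construction. In the \emph{chord-free case}, let $w$ be the outer vertex adjacent to $y$ other than $x$, let $u_1,\ldots,u_s$ be its interior neighbors, and let $z$ be the other outer neighbor of $w$. Following Thomassen's two-color trick, commit in advance to two colors $c_1,c_2\in L(w)$ avoiding $\varphi(y)$ and $f(w)$, and remove $c_1$, $c_2$, and $\varphi(w)$ from each $L(u_i)$, leaving at least $10-3=7$ (resp.\ $7-3=4$) colors per list, well above the relaxed outer threshold. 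Apply the induction to $G-w$ with the reduced lists to obtain a recoloring sequence $\sigma$ in which each $u_i$ stays out of $\{c_1,c_2,\varphi(w)\}$, so $w$ can be held at $\varphi(w)$ throughout $\sigma$; at the end, at least one of $c_1,c_2$ differs from $\varphi'(z)$, and we recolor $w$ once to that color.

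The main obstacle will be reconciling the initial coloring with the reduced lists in the inductive call: one needs $\varphi(u_i)\notin\{c_1,c_2,\varphi(w)\}$ for $\varphi$ restricted to $G-w$ to be a valid $L$-coloring with respect to the reduced lists, but this cannot be ensured by the choice of $c_1,c_2$ alone since $c_1$ or $c_2$ might coincide with some $\varphi(u_i)$. Addressing this likely requires a further strengthening that permits the initial coloring to lie outside the list at some vertices, at the cost of one extra recoloring per offending vertex to bring it into the list; the budget of two recolorings per vertex is exactly what accommodates this extra step. A parallel complication arises in the chord case, where the sub-induction on $G_1$ changes $\varphi(u),\varphi(v)$ to $\varphi'(u),\varphi'(v)$, forcing the pre-fixed endpoints of $G_2$'s call to absorb a one-step recoloring; tracking this budget carefully through the recursion is what appears to force the list-size jump from Thomassen's $5,3$ to our $10,7$.
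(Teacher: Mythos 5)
Your plan is the natural first attempt---adapt Thomassen's vertex-nibbling induction with a pre-fixed outer edge and the two-reserved-colors trick---and the paper explicitly considers and rejects exactly this route in its ``proof idea'' section before taking a substantially different one. The obstacle you name (reconciling $\varphi$ with the reduced lists) is real, but it is not the one that actually kills this approach. The fatal issue is the color $\varphi(w)$ itself: you claim $w$ ``can be held at $\varphi(w)$ throughout $\sigma$'' because the interior neighbors $u_i$ have $\varphi(w)$ removed from their lists, but the other outer neighbor $z$ of $w$ does \emph{not} have $\varphi(w)$ removed, so the inductive recoloring of $G-w$ may assign $z$ the color $\varphi(w)$ at some intermediate step, producing an improper coloring. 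To prevent this you must delete $\varphi(w)$ from $L(z)$ as well; but $\varphi(w)$ is part of the initial coloring and you have no control over it, so this deletion cannot be absorbed into the reserved-colors trick the way $c_1,c_2$ can, and it shrinks the outer-face list of $z$. As the recursion proceeds, a single outer vertex can be the ``$z$'' of several consecutive nibbles and repeatedly lose colors this way, so the outer-face list bound does not hold inductively. This is precisely the gap the authors point to: ``in any case we do not have any control over the removal of the color $\varphi(v)$.''

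The paper's resolution is structurally different from yours on every axis. Rather than nibbling one boundary vertex at a time, they delete the \emph{entire} outer cycle $K$ at once (minus a short anchor path $H$ of at most $3$, resp.\ $5$, vertices), prescribing in advance the whole sequence of colors the boundary vertices will take. Rather than a pre-fixed edge, the anchor $H$ carries a prescribed \emph{two-phase} recoloring ``trajectory'' $(\delta_0,\delta_1,\delta_2)$, and the inductive object is an $(L_1,L_2)$-trajectory: a proper coloring $\varphi_0$ (not required to lie in the lists---this is the clean version of your ``permit the initial coloring to lie outside the list'' idea), followed by a once-only recoloring to an $L_1$-coloring $\varphi_1$, followed by a once-only recoloring to an $L_2$-coloring $\varphi_2$. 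The crucial structural lemma is that the boundary cycle admits a trajectory that is \emph{stable outside $H$}, meaning the $K\setminus H$ vertices are recolored only in phase one and then held fixed; this is what keeps the list reduction on interior vertices bounded (at most $4$ colors lost in $L_1$, at most $2$ in $L_2$) and is the source of the constants $10$ and $7$. Your chord-case sketch also glosses over the coordination problem: if $uv$ is pre-fixed in the $G_2$ call at the \emph{final} colors produced by $G_1$, then the $G_2$ recolorings start from an improper configuration (since $u,v$ still carry $\varphi(u),\varphi(v)$), and if pre-fixed at the \emph{initial} colors, the subsequent $G_1$ recolorings of $u,v$ may conflict with $G_2$'s final coloring. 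The paper handles this with the $H$-late/lifting machinery (Observation~\ref{obs:reorder} and Lemma~\ref{lem:total}), which requires the anchor to be a path whose recolorings are performed last and whose full three-stage trajectory is fixed up front---a pre-fixed edge does not carry enough information. So your proposal, while it correctly identifies one of the two failure modes of the naive adaptation, does not contain the two ideas (whole-boundary deletion with a stable two-phase trajectory, and the trajectory-lifting formalism for the anchor path) that the paper's proof actually rests on.
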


Theorems~\ref{thm:main} and~\ref{thm:trmain} then easily follow.

\begin{proof}[Proof of Theorem~\ref{thm:main} and~\ref{thm:trmain}]
Let $k=7$ and $p=3$ if $G$ is triangle-free, and $k=10$ and $p=4$ otherwise.
Theorem~\ref{thm:col} applied with the list assignment $L$ giving each vertex the list $\{1,\ldots,k\}$
and the function $f$ assigning the color $k$ to each vertex shows that
for every $k$-coloring $\alpha$ of $G$, there exists a $k$-coloring $\alpha'$ such that $\alpha'$ only uses colors $\{1,\ldots,k-1\}$
and the distance from $\alpha$ to $\alpha'$ in $R_k(G)$ is at most $2n$.

Since $G$ is planar, the result of Thomassen~\cite{thomassen} shows that $G$ contains an independent set $I$ such that $G-I$ is $3$-degenerate.
Moreover, if $G$ is triangle-free, then $G$ is $3$-degenerate, and thus it contains an independent set $I$ such that $G-I$ is $2$-degenerate.
In either case, $R_{k-1}(G-I)$ has diameter at most $p(n-|I|)$ by Theorem~\ref{thm:bousquet}.
Therefore, by Lemma~\ref{lemma:freeup}, $R_k(G)$ has diameter at most
$2(2n+|I|)+p(n-|I|)=(4+p)n+(2-p)|I|\le (4+p)n$, as required.
\end{proof}

How to prove Theorem~\ref{thm:col}?  The first idea is to adapt the proof of Thomassen~\cite{thomassen1} for colorability of planar
graphs from lists of size five: Recolor $G$ by small pieces nibbled from the outer face boundary, and exploit weaker assumptions
on the list sizes on the outer face boundary to prevent conflicts with previously recolored parts. 

That is, we could allow each vertex on the outer face boundary to have only a list of size $8$ rather than $10$.  Then, we could try to
eliminate a vertex $v$ incident with the outer face.  Let us say we decide on a new color $c\neq f(v)$ for $v$.  It may not be immediately possible
to recolor $v$ to $c$, as some of its neighbors can use the color $c$.  However, we can remove $\varphi(v)$ and $c$ from the lists
of the neighbors of $v$ and apply induction to $G-v$.  Whatever recolorings we perform on $G-v$ do not conflict with $v$, as $\varphi(v)$
was removed from the lists of the neighbors of $v$.  Moreover, in the final coloring, no neighbor of $v$ has color $c$, and thus
we can finish by recoloring $v$ to $c$.  Deleting the two colors $\varphi(v)$ and $c$ from the lists of size $10$ reduces the list
sizes to $8$, which is fine as the neighbors of $v$ are incident with the outer face of $G-v$.

Unfortunately, there is a gap in the above argument: We also have to delete the colors from the list of a neighbor $u$ of $v$
adjacent to it in the boundary of the outer face, decreasing its size below $8$.  We might think about solving this issue by choosing
the color $c$ not belonging to $L(u)$ (similarly to~\cite{thomassen3}) or by reserving several colors for $v$ to choose from rather
than just a single color $c$ and not removing them from $L(u)$ (similarly to~\cite{thomassen1}), but in any case we do not have any control over the removal of the color $\varphi(v)$.

We solve this issue by choosing the final color for all vertices of the cycle~$K$ bounding the outer face at once,
applying induction to $G-V(K)$ with reduced lists, and finally recoloring the vertices of $K$ to their chosen colors in some order.
This brings another problem, though: A vertex $x\in V(G)\setminus V(K)$ can have many neighbors in $K$, and thus the size of the list
of $x$ could be reduced too much (again, regardless of the choice of the final colors on $K$, we have no control over the colors assigned
to $K$ by $\varphi$).

A standard way of dealing with this issue is by a precoloring extension argument.  In our setting, we could aim for the
following claim.  Suppose $|L(v)|\ge 6$ for each vertex $v\in V(K)$ and $|L(v)|\ge 10$ for $v\in V(G)\setminus V(K)$.
Moreover, suppose a sequence of recolorings $\sigma$ is prescribed for a $3$-vertex path $P\subset K$.  Then there exists a sequence
of recolorings of~$G$ from the given lists, where the final color of each vertex $v$ is different from $f(v)$,
and where the vertices of~$P$ are recolored according to $\sigma$ \emph{after} all other recolorings of~$G$.
Now, if every vertex $x\in V(G)\setminus V(K)$ has at most two neighbors in $K$, we can proceed as in the previous paragraph:
For each neighbor $v$, delete from $L(x)$ the color $\varphi(v)$ and the desired final color of $v$ (as $x$ has at most two neighbors in $K$
and $|L(x)|=10$, the resulting list of $x$ has size at least $6$), and apply the induction to $G-V(K)$.  Otherwise, $G$ can be split into two graphs $G_1$ and $G_2$ intersecting in a path $v_1xv_2$,
where $v_1$ and $v_2$ are non-adjacent neighbors of $x$ in $K$.  For simplicity, let us ignore the possible complications if say
$v_1$ is the middle vertex of $P$, and suppose by symmetry that $P\subseteq G_1$.  Then we can apply induction to
$G_1$, obtaining a sequence $\sigma_1$ of recolorings.  Next, we apply induction to $G_2$, with $\sigma_1$ prescribing a sequence of
recolorings of the path $v_1xv_2$, obtaining a sequence $\sigma_2$ of recolorings.  Finally, we can alter these to a sequence of
recolorings of $G$: First, perform the recolorings in the sequence $\sigma_2$ restricted to $G_2-\{v_1,x,v_2\}$, which is possible
since the recolorings of the path $v_1xv_2$ are performed last in $\sigma_2$.  Then, perform $\sigma_1$, which is possible since
the recolorings of the path $v_1xv_2=G_1\cap G_2$ in $\sigma_1$ are the same as those at the end of $\sigma_2$.

This promising idea unfortunately fails when we start to consider the details.  The basic case is that $G$ consists just of
a path $P$ and a vertex $v$ adjacent to all three vertices of $P$.  Then $v$ has only five colors to choose from
(those in $L(v)\setminus\{f(v)\}$), but also has to avoid all the colors that appear on $P$ in its prescribed recoloring sequence;
and there can be up to six such colors (the original and the final color for each vertex of $P$).  Considering the previous
paragraph, it turns out we have some control over the final color of the middle vertex of $P$ (we can remove some colors from
the list of $x$ before applying induction to $G_1$); still, there are five forbidden colors over which we have little to no control.

To avoid this issue, we use a more complicated 2-phase recoloring process, where in each phase, we allow to recolor each vertex at most once.
In the first phase, vertices $v\in V(K)$ will have lists of size $6$, but we do not forbid them from being recolored to the color $f(v)$.
In the second phase, they will have lists of size $8$.  Considering the previous paragraph, in both phases $v$ has at least $6$ possible final
colors, while there are only five colors used on $P$ over which we do not have any control, giving us a hope the recoloring might succeed.
For the part of the argument in the case that no vertex in $V(G)\setminus V(K)$ has more than two neighbors in $K$, the key insight is
that with some care, it is possible to recolor the vertices of $K$ only once, in the first phase, and leave them untouched in the
second phase.  Thus, while in the first phase, we might need to remove up to four colors from the list of each vertex $x\in V(G)\setminus V(K)$,
in the second phase we are only removing at most two colors (for each neighbor $v\in V(K)$ of $x$, the color of $v$ after the first phase,
which stays unchanged in the second phase).

Let us remark on a slight discrepancy between the exposition in the previous paragraph and the rest of the paper:
For convenience, we directly remove the color $f(v)$ from the second phase lists, thus leading to the assumption (see (Ga) below) that the
second phase lists have size at least $7$ rather than~$8$.

\section{Preliminaries}

Let $G$ be a graph.
A proper coloring $\varphi'$ of $G$ is obtained from another coloring $\varphi$ by \emph{recoloring} if $\varphi$ and $\varphi'$
differ in the color of exactly one vertex.
A \emph{sequence of recolorings} is a sequence $\sigma=(v_1,c_1), \ldots, (v_m,c_m)$, where for $i=1,\ldots,m$
$v_i$ is a vertex of $G$ whose color is changed to $c_i$.  By $\varphi+\sigma$, we denote the coloring after
performing these changes; unless specified otherwise, we require that all the intermediate colorings are proper; when we want to make this
more explicit, we say that this is a \emph{proper} sequence of recolorings.
We also use $+$ to indicate concatenation of sequences.
A sequence of recolorings of $G$ is said to be a \emph{once-only} if every vertex of $G$ is recolored at most once.
For an induced subgraph $H$ of $G$, we write $\sigma^{H}$ for the subsequence of $\sigma$ consisting of the recolorings on $V(H)$.
We say the sequence $\sigma$ of recolorings of $G$ is \emph{$H$-late}
if the recolorings of the vertices of $H$ appear after all recolorings of vertices of $V(G)\setminus V(H)$ in the sequence,
i.e., $\sigma=\sigma^{G-V(H)}+\sigma^H$.

Let $L_1$ and $L_2$ be list assignments for a graph $G$.
An \emph{$(L_1, L_2)$-trajectory (starting with $\varphi_0$)} in $G$ is a triple $\vec{\varphi}=(\varphi_0,\varphi_1, \varphi_2)$
where $\varphi_0$ is a proper coloring of $G$ and for $i\in\{1,2\}$, $\varphi_i$ is an $L_i$-coloring of $G$
obtained from $\varphi_{i-1}$ by a once-only sequence $\sigma_i$ of recolorings.  We say that $(\sigma_1,\sigma_2)$
is a \emph{witness} of the trajectory.

Let $H$ be an induced subgraph of $G$ and let $\vec{\delta}$ be an $(L_1, L_2)$-trajectory in $H$.
We say that $\vec{\delta}$ \emph{lifts} to $\vec{\varphi}$ if for $i\in\{0,1,2\}$, $\delta_i$ is the restriction
of $\varphi_i$ to $V(H)$, and for $i\in \{1,2\}$, there exists a once-only $H$-late sequence $\sigma_i$ of recolorings from $\varphi_{i-1}$
to $\varphi_i$.  Again, we say that $(\sigma_1,\sigma_2)$ is a witness that $\vec{\delta}$ lifts to $\vec{\varphi}$.

Importantly, it turns out that in both $\sigma_1$ and $\sigma_2$, the recolorings on $H$ can be performed in any order that is valid when
considering $H$ alone, as stated more precisely in the following observation.
\begin{observation}\label{obs:reorder}
Let $H$ be an induced subgraph of a graph $G$ and let $L_1$ and $L_2$ be list assignments for $G$.  Let $\vec{\delta}$ be an $(L_1, L_2)$-trajectory
in $H$ witnessed by $(\sigma_1,\sigma_2)$.  If $\vec{\delta}$ lifts to an $(L_1, L_2)$-trajectory $\vec{\varphi}$
in $G$, then this is witnessed by a pair $(\pi_1,\pi_2)$ such that $\pi_1^H=\sigma_1$ and $\pi_2^H=\sigma_2$.
\end{observation}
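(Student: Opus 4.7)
The plan is as follows. By hypothesis, $\vec\delta$ lifts to $\vec\varphi$, so there already exists \emph{some} witness $(\pi_1',\pi_2')$; the task is to modify it into a witness that restricts to the prescribed $\sigma_1,\sigma_2$ on $V(H)$. Since each $\pi_i'$ is $H$-late, it decomposes as $\pi_i' = (\pi_i')^{G-V(H)} + (\pi_i')^H$, so I would simply replace the trailing $H$-block with $\sigma_i$, defining $\pi_i := (\pi_i')^{G-V(H)} + \sigma_i$ for $i\in\{1,2\}$, and then check that $(\pi_1,\pi_2)$ meets all the conditions for being a witness.

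The first observation is to identify the intermediate coloring $\psi_i$ reached after executing $(\pi_i')^{G-V(H)}$ starting from $\varphi_{i-1}$. On $V(H)$, $\psi_i$ equals $\varphi_{i-1}|_{V(H)} = \delta_{i-1}$ because $(\pi_i')^{G-V(H)}$ does not touch $V(H)$; on $V(G)\setminus V(H)$, $\psi_i$ equals $\varphi_i|_{V(G)\setminus V(H)}$ because the remaining $H$-block of $\pi_i'$ does not touch those vertices. Crucially, $\psi_i$ is proper on all of $G$, as it is an intermediate coloring of the proper sequence $\pi_i'$; in particular, for every edge $uv$ of $G$ with $u\in V(H)$ and $v\notin V(H)$, one has $\delta_{i-1}(u)\ne \varphi_i(v)$.

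The core verification is that appending $\sigma_i$ to $(\pi_i')^{G-V(H)}$ yields a valid sequence terminating in $\varphi_i$. Throughout the execution of $\sigma_i$ from $\psi_i$, the vertices outside $V(H)$ retain their $\varphi_i$-colors, so edges inside $V(G)\setminus V(H)$ remain properly colored; edges inside $H$ are handled by the validity of $\sigma_i$ on $H$ alone. The only case that needs thought is a crossing edge $uv$ with $u\in V(H)$ and $v\notin V(H)$: if $u$ has not yet been touched by $\sigma_i$, its color is $\delta_{i-1}(u)$, and properness follows from $\psi_i$ being proper; if $u$ has already been touched, its color is $\delta_i(u)=\varphi_i(u)$, and properness follows from $\varphi_i$ being proper. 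After $\sigma_i$, the coloring on $V(H)$ is $\delta_i=\varphi_i|_{V(H)}$, so globally we arrive at $\varphi_i$; the sequence $\pi_i$ is once-only and $H$-late by construction, and $\pi_i^H = \sigma_i$ by definition. There is no real obstacle here—the main content is the realization that the crossing-edge constraint is free once one notices that $\psi_i$ is proper, which is why the reordering on $V(H)$ does not interact with the vertices outside.
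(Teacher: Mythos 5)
Your proof is correct and follows essentially the same route as the paper: take a witness, replace the trailing $H$-block with $\sigma_i$, and verify properness by observing that the intermediate coloring after $(\pi_i')^{G-V(H)}$ is proper and that vertices outside $V(H)$ are frozen at their $\varphi_i$-values during $\sigma_i$. The paper phrases the crossing-edge check slightly differently (checking at the moment a vertex of $H$ is recolored, via the original witness's $H$-block), but the content is the same.
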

\begin{proof}
Let $(\omega_1,\omega_2)$ be a witness that $\vec{\delta}$ lifts to
the $(L_1, L_2)$-trajectory $\vec{\varphi}$, and for $i\in\{1,2\}$, let $\pi_i=\omega_i^{G-V(H)}+\sigma_i$.
Let us argue that $(\pi_1,\pi_2)$ is also a witness of this lifting.  Note that
for $i\in\{1,2\}$, the effect of the recolorings $\sigma_i$ and $\omega_i^H$ is the same;
for each $v\in V(H)$ such that $\delta_{i-1}(v)\neq \delta_i(v)$, the color of $v$ is changed
from $\delta_{i-1}(v)$ to $\delta_i(v)$. Therefore, we only need to argue that the sequence $\pi_i$
is proper on $\varphi_{i-1}$.  Since $\pi_i$ and $\omega_i$ share the initial segment, it suffices to show that
the sequence $\sigma_i$ is proper on $\varphi'_{i-1}=\varphi_{i-1}+\omega_i^{G-V(H)}$.

Consider any edge $uv\in E(G)$ with $v\in V(H)$ at the moment the sequence $\sigma_i$ recolors
$v$ to the color $\delta_i(v)$.  If $u\in V(H)$, then $u$ does not have the color $\delta_i(v)$,
since $\sigma_i$ is a proper sequence of recolorings from $\delta_{i-1}$ on $H$.  If $u\not\in V(H)$,
then $\varphi'_{i-1}(u)\neq \delta_i(v)$, since the sequence $\omega_i^H$ also recolors $v$ to $\delta_i(v)$
at some point.
\end{proof}

Suppose $\vec{\varphi}$ is an $(L_1, L_2)$-trajectory in a graph $G$.
For $F\subseteq G$, let $\vec{\varphi}\restriction F$ denote the triple
$(\varphi_0\restriction V(F), \varphi_1\restriction V(F), \varphi_2\restriction V(F))$, and note that
$\vec{\varphi}\restriction F$ is an $(L_1,L_2)$-trajectory in $F$.
We frequently use the following result that enables us to combine two trajectories.

\begin{lemma}\label{lem:total}
Let $H$ be an induced subgraph of a graph $G$.  Suppose $G=G_1\cup G_2$ for induced subgraphs $G_1$
and $G_2$ of $G$ and $H\subseteq G_1$.  Let $\varphi_0$ be a proper coloring of $G$,
let $L_1$ and $L_2$ be list assignments for $G$, and let $\vec{\delta}$ be an $(L_1,L_2)$-trajectory in $H$. If
\begin{itemize}
\item $\vec{\delta}$ lifts to an $(L_1,L_2)$-trajectory $\vec{\psi}$ in $G_1$ starting with $\varphi_0\restriction V(G_1)$, and
\item $\vec{\psi}\restriction (G_1\cap G_2)$ lifts to an $(L_1,L_2)$-trajectory $\vec{\theta}$ in $G_2$ starting with $\varphi_0\restriction V(G_2)$,
\end{itemize}
then $\vec{\delta}$ lifts to an $(L_1,L_2)$-trajectory in $G$ starting with $\varphi_0$.
\end{lemma}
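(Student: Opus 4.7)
The plan is to define the lifted trajectory $\vec{\varphi}$ coordinatewise and then assemble explicit witnesses from those of the two given liftings. For $i\in\{0,1,2\}$, set $\varphi_i(v)=\psi_i(v)$ when $v\in V(G_1)$ and $\varphi_i(v)=\theta_i(v)$ when $v\in V(G_2)$; the two cases agree on $C:=V(G_1\cap G_2)$ because $\vec{\theta}$ lifts $\vec{\psi}\restriction(G_1\cap G_2)$, and since every edge of $G=G_1\cup G_2$ lies in one of the two subgraphs, $\varphi_i$ is a well-defined proper $L_i$-coloring of $G$ that agrees with the prescribed $\varphi_0$ at $i=0$. Now take witnesses $(\alpha_1,\alpha_2)$ for the lifting of $\vec{\delta}$ to $\vec{\psi}$ in $G_1$ and $(\beta_1,\beta_2)$ for the lifting of $\vec{\psi}\restriction(G_1\cap G_2)$ to $\vec{\theta}$ in $G_2$. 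For each $i$, the sequence $\alpha_i^{G_1\cap G_2}$ is a once-only proper sequence in $G_1\cap G_2$ witnessing the trajectory $\vec{\psi}\restriction(G_1\cap G_2)$, so Observation~\ref{obs:reorder} (applied inside $G_2$, with induced subgraph $G_1\cap G_2$) lets us replace $(\beta_1,\beta_2)$ by a witness satisfying $\beta_i^{G_1\cap G_2}=\alpha_i^{G_1\cap G_2}$. Define
$$\pi_i:=\beta_i^{G_2-C}+\alpha_i,$$
and the goal is to prove that $(\pi_1,\pi_2)$ witnesses that $\vec{\delta}$ lifts to $\vec{\varphi}$ in $G$.

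The once-only, $H$-late, and end-state properties are bookkeeping. Since $\alpha_i$ acts on $V(G_1)$ and $\beta_i^{G_2-C}$ on the disjoint set $V(G_2)\setminus C$, once-onliness is inherited; since $H\subseteq G_1$ forces $V(H)\cap(V(G_2)\setminus C)=\emptyset$ and $\alpha_i$ is already $H$-late, so is $\pi_i$. After the first block, $V(G_2)\setminus C$ sits at its $\theta_i$-values (this is exactly the first half of the $C$-late run of $\beta_i$ in $G_2$) while $V(G_1)$ is untouched, and then $\alpha_i$ moves the $V(G_1)$-part from $\psi_{i-1}$ to $\psi_i$, yielding $\varphi_i$.

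The main obstacle is properness of $\pi_i$ in $G$, and its resolution rests on the geometric fact that no edge of $G$ joins $V(G_1)\setminus C$ to $V(G_2)\setminus C$: any such edge would lie in $E(G_1)\cup E(G_2)$ and hence force one endpoint into $C$. During the $\beta_i^{G_2-C}$ block each recolored vertex has all its $G$-neighbors inside $V(G_2)$, so properness reduces to properness of $\beta_i$ in $G_2$; during the $\alpha_i$ block, recolorings at vertices of $V(G_1)\setminus C$ only see $V(G_1)$-neighbors and are covered by properness of $\alpha_i$ in $G_1$. The single delicate case is a recoloring $(v,c)$ with $v\in C$ potentially conflicting with a neighbor $u\in V(G_2)\setminus C$ currently colored $\theta_i(u)$; here the alignment $\alpha_i^{G_1\cap G_2}=\beta_i^{G_1\cap G_2}$ forced by Observation~\ref{obs:reorder} is essential, since the same recoloring $(v,c)$ appears at the analogous step of $\beta_i$ in $G_2$, at which moment $V(G_2)\setminus C$ already holds $\theta_i$, so properness of $\beta_i$ already rules out $\theta_i(u)=c$. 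This is the step that demands the most care, but it reduces cleanly to the properness of $\beta_i$ via the observation.
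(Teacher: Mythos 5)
Your proof is correct and takes essentially the same approach as the paper: define $\varphi_i=\psi_i\cup\theta_i$, align the two witnesses on $G_1\cap G_2$ via Observation~\ref{obs:reorder}, and concatenate $\beta_i^{G_2-C}+\alpha_i$. You simply spell out the properness verification in more detail than the paper, which leaves that check to the reader.
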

\begin{proof}
Note that $\psi_0=\varphi_0\restriction V(G_1)$ and $\theta_0=\varphi_0\restriction V(G_2)$.
Since $\vec{\psi}\restriction (G_1\cap G_2)$ lifts to $\vec{\theta}$, we have
$\psi_i\restriction V(G_1\cap G_2)=\theta_i\restriction V(G_1\cap G_2)$
for $i\in\{0,1,2\}$.  Let $\vec{\varphi}=(\varphi_0,\varphi_1,\varphi_2)$, where $\varphi_i=\psi_i\cup \theta_i$ for $i\in\{1,2\}$.
We claim that $\vec{\varphi}$ is an $(L_1,L_2)$-trajectory in $G$ and $\vec{\delta}$ lifts to it.

Let $(\sigma_1,\sigma_2)$ witness that $\vec{\delta}$ lifts to $\vec{\psi}$, and let $(\pi_1,\pi_2)$ witness
that $\vec{\psi}\restriction (G_1\cap G_2)$ lifts to $\vec{\theta}$.  Note that $(\sigma_1^{G_1\cap G_2},\sigma_2^{G_1\cap G_2})$
is a witness of the $(L_1,L_2)$-trajectory $\vec{\psi}\restriction (G_1\cap G_2)$,
and thus by Observation~\ref{obs:reorder}, we can assume $\sigma_i^{G_1\cap G_2}=\pi_i^{G_1\cap G_2}$ for $i\in\{1,2\}$.
Then $(\pi_1^{G_2-V(G_1)}+\sigma_1, \pi_2^{G_2-V(G_1)}+\sigma_2)$ witnesses that $\vec{\delta}$ lifts to $\vec{\varphi}$.
Indeed, for $i\in\{1,2\}$, since $\pi_i$ is $(G_1\cap G_2)$-late, the restrictions of these sequences to $G_1$ and $G_2$
match $\sigma_i$ and $\pi_i$, respectively, ensuring all intermediate colorings are proper.
\end{proof}
Throughout the rest of the proof, we will several times need arguments analogous to those
used in the proofs of Observation~\ref{obs:reorder} and Lemma~\ref{lem:total}.  We will
generally just describe the resulting trajectories and their witnesses and leave the straightforward
verification of their validity to the reader.

A \emph{scene} $S=(G,H,L_1,L_2,\varphi_0,\vec{\delta})$ consists of
\begin{itemize}
\item a plane graph $G$ and an induced subgraph $H$ of $G$, with $V(H)$ consisting of 
vertices consecutive in the boundary of the outer face of $G$,
\item list assignments $L_1$ and $L_2$ for $G$ and a proper coloring $\varphi_0$ of $G$, and
\item an $(L_1,L_2)$-trajectory $\vec{\delta}$ in $H$ starting with $\varphi_0\restriction V(H)$.
\end{itemize}
We say the scene $S$ \emph{has a trajectory} if $\vec{\delta}$ lifts to an $(L_1,L_2)$-trajectory $\vec{\varphi}$ in $G$ starting with $\varphi_0$;
in that case, $\vec{\varphi}$ is a \emph{trajectory of $S$}.

\section{General planar graphs}

A vertex of a plane graph is \emph{internal} if it is not incident with the outer face.
A scene $S=(G,H,L_1,L_2,\varphi_0,\vec{\delta})$ is \emph{valid} if $|V(H)|\le 3$ and
\begin{itemize}
\item[(Ga)] for each $v\in V(G)$, $|L_1(v)\setminus L_2(v)|\le 1$, $|L_1(v)|\ge 6$ and $|L_2(v)|\ge 7$,
\item[(Gb)] if $v\in V(G)$ is internal, then $|L_1(v)|\ge 10$ and $|L_2(v)|\ge 9$, and
\item[(Gc)] if $|V(H)|=3$ and a vertex $v\in V(G)$ is adjacent to all vertices of $H$, then
$$L_1(v)\neq \bigcup_{u\in V(H)} \{\delta_0(u),\delta_1(u)\}.$$
\end{itemize}
We aim to show by contradiction that every valid scene has a trajectory; from this result, it is easy to derive
the case (a) of Theorem~\ref{thm:col}. 
A \emph{counterexample} is a valid scene without a trajectory.  We say $S$ is a \emph{minimal counterexample}
if $S$ is a counterexample such that $|V(G)|+|E(G)|$ is minimum among all counterexamples,
and subject to that, $|V(H)|$ is maximum.
Let us start by studying the properties of a hypothetical minimal counterexample.

\begin{lemma}\label{lemma:basprop}
Suppose $S=(G,H,L_1,L_2,\varphi_0,\vec{\delta})$ is a minimal counterexample.
Then $G$ is $2$-connected, $H$ is a $3$-vertex path, every chord of the outer
face of $G$ is incident with the middle vertex of $H$, and every triangle in $G$ bounds a face.
\end{lemma}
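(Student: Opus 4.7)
The plan is a standard minimal-counterexample reduction: for each claimed property I assume the contrary, use it to decompose $G$ along a small cut (cut vertex, chord, or separating triangle) or to extend $H$, apply minimality to each smaller piece, and finally glue the pieces' trajectories together using Lemma~\ref{lem:total} to produce a trajectory of $S$, contradicting the choice of $S$.

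Concretely, for 2-connectedness, if $v$ is a cut vertex then write $G=G_1\cup G_2$ with $V(G_1)\cap V(G_2)=\{v\}$ and $H\subseteq G_1$. Minimality on $(G_1,H,\varphi_0\restriction V(G_1),\vec\delta)$ gives a trajectory $\vec\psi$, and then minimality on $(G_2,\{v\},\varphi_0\restriction V(G_2),\vec\psi\restriction\{v\})$---valid because $|V(H_2)|=1$ renders (Gc) vacuous and the list-size conditions are inherited---gives a trajectory $\vec\theta$; Lemma~\ref{lem:total} combines them into a trajectory of $S$. The chord case is analogous: split along a chord $xy$ not incident with the middle of $H$, recurse with $H_2=\{x,y\}$ in the piece not containing $H$ (again $|V(H_2)|=2$ so (Gc) is vacuous). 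The separating-triangle case splits along $T$ and recurses on the inner piece with $H_2=V(T)$; here the key is that any vertex $w$ in the inner piece adjacent to all of $V(T)$ is strictly inside $T$, hence internal in $G$, so $|L_1(w)|\ge 10>6\ge|\bigcup_{u\in V(T)}\{\delta_0'(u),\delta_1'(u)\}|$ automatically gives (Gc).

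The $|V(H)|=3$ step is structurally different: since the outer face of the (now 2-connected) $G$ is a cycle $K$, if $|V(H)|<3$ I pick $u\in V(K)\setminus V(H)$ consecutive to $V(H)$ on $K$ and extend $\vec\delta$ to $\vec{\delta'}$ on $H'=V(H)\cup\{u\}$. Set $\delta_0'(u)=\varphi_0(u)$, and choose $\delta_1'(u)\in L_1(u)$ and $\delta_2'(u)\in L_2(u)$ to make $\vec{\delta'}$ a proper once-only trajectory on $H'$; the forbidden colors for $\delta_1'(u)$ are the $\delta_0,\delta_1$-values of the at most two $H'$-neighbors of $u$, plus possibly one extra color to ensure (Gc) when $|V(H')|=3$, and analogously for $\delta_2'(u)$. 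With $|L_1(u)|\ge 6$ and $|L_2(u)|\ge 7$ there is enough room. If the resulting scene $S'$ has a trajectory, restriction yields a trajectory of $S$ (contradiction); otherwise $S'$ is a counterexample with strictly larger $|V(H)|$, contradicting the choice of $S$.

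The main obstacle I anticipate is this extension step. I must produce $\delta_1'(u)$ and $\delta_2'(u)$ that simultaneously respect the properness of the once-only recolorings in both phases, lie in the respective lists, respect the cross-list constraint $|L_1\setminus L_2|\le 1$ of (Ga), and preserve (Gc). The counts are tight against the size-6 and size-7 boundary lists, so the argument requires a careful case analysis---splitting on the number of $H'$-neighbors of $u$, on whether a chord of $K$ creates an extra $H$-neighbor of $u$, and on whether (Gc) forces an additional forbidden color---but in each case the arithmetic works out to leave a valid choice.
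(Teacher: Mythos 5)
Your overall strategy matches the paper's: assume a smaller cut (cut vertex, chord, separating triangle) or a short $H$, decompose, apply minimality to the pieces, and glue. However, your proposal has two genuine gaps.

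\textbf{Cut vertex that is the middle vertex of $H$.} You write ``if $v$ is a cut vertex then write $G=G_1\cup G_2$ with $V(G_1)\cap V(G_2)=\{v\}$ and $H\subseteq G_1$,'' but such a decomposition need not exist. If $H=h_1h_2h_3$ and $v=h_2$ is a cut vertex whose removal separates $h_1$ from $h_3$ (think of a bowtie with $h_2$ the hub), then every proper decomposition splits $H$ across both pieces, so Lemma~\ref{lem:total} (which requires $H\subseteq G_1$) does not apply. The paper handles this by applying minimality to two scenes $S_1,S_2$ each carrying a \emph{two-vertex} subpath $H\cap G_i$ of $H$ and then gluing the two witnesses by hand (concatenating $\sigma_{1,j}^{G_1-V(H)}+\sigma_{2,j}^{G_2-V(H)}+\pi_j$ where $\pi_j$ is the prescribed recoloring of $H$); this works precisely because the recolorings of $H\cap G_i$ are placed last on each side. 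Your proof needs this separate case.

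\textbf{$H$ is a triangle.} Your argument for establishing that $H$ is a $3$-vertex path only treats $|V(H)|<3$ and extends by one vertex. But $H$ could have three vertices and still fail to be a path: since $H$ is induced and its vertices are consecutive on the outer face boundary, $H$ can be a triangle (necessarily bounding the outer face once chords are excluded). This case does not follow from the extension argument; the paper disposes of it separately by deleting one edge $e$ of $H$ and passing to the scene $(G-e,H-e,L_1,L_2,\varphi_0,\vec\delta)$, which is strictly smaller in $|V|+|E|$ and whose trajectory (after reordering the recolorings on $H-e$ via Observation~\ref{obs:reorder}) is also a trajectory of $S$. Without this you have not actually shown $H$ is a path.

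Two smaller remarks. In the extension step, once the outer face is chordless a vertex adjacent to all three vertices of $H'$ must be internal and hence has $|L_1|\ge 10>6$, so (Gc) is automatic; you do not need to reserve an ``extra color.'' Also, the constraint $|L_1\setminus L_2|\le 1$ plays no role here, since the choices $\delta_1'(u)\in L_1(u)$ and $\delta_2'(u)\in L_2(u)$ are made in their own respective lists.
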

\begin{proof}
Clearly, $G$ is connected.  Let us now argue that it does not contain a vertex cut of size one.
Suppose for a contradiction $G=G_1\cup G_2$, where $G_1$ and $G_2$ are proper induced subgraphs of $G$
intersecting in a single vertex $v$.  Let us first consider the case that $H\subseteq G_1$.
For $i\in\{1,2\}$, we define a scene $S_i=(G_i,H_i,L_1,L_2,\varphi_0\restriction V(G_i),\vec{\delta}^i)$
as follows.  Let $H_1=H$ and $\vec{\delta}^1=\vec{\delta}$.  By the minimality of $S$,
$S_1$ has a trajectory $\vec{\psi}$.  Let $H_2=v$ and $\vec{\delta}^2=\vec{\psi}\restriction H_2$.
Note that $S_2$ is valid, and by the minimality of $S$, $S_2$ has a trajectory.
Lemma~\ref{lem:total} thus implies that $S$ has a trajectory, which is a contradiction.

Therefore, we can assume that $H\not\subseteq G_1$, and symmetrically, $H\not\subseteq G_2$.
Consequently, $H$ is a $3$-vertex path, $v$ is the middle vertex of $H$, and for $i\in\{1,2\}$,
$H\cap G_i$ is a $2$-vertex path.  For $i\in\{1,2\}$, let $S_i$ be the scene $(G_i,H\cap G_i, L_1,L_2, \varphi_0\restriction V(G_i),\vec{\delta}\restriction (H\cap G_i))$.
Note that $S_i$ is valid, and by the minimality of $S$, it has a trajectory $\vec{\psi}^i$ witnessed by $(\sigma_{i,1},\sigma_{i,2})$.
Let $\vec{\pi}$ be a witness of the trajectory $\vec{\delta}$.
For $j\in\{1,2\}$, let $\varphi_j=\psi^1_j\cup \psi^2_j$ and $\sigma_j=\sigma_{1,j}^{G_1-V(H)}+\sigma_{2,j}^{G_2-V(H)}+\pi_j$.
Then $(\sigma_1,\sigma_2)$ is a witness that $\vec{\varphi}$ is a trajectory of $S$, which is a contradiction.

The argument from the first paragraph can also be applied in the following cases:
\begin{itemize}
\item The outer face of $G$ has a chord $uv$ and, writing $G=G_1\cup G_2$ for proper induced subgraphs
$G_1$ and $G_2$ of $G$ intersecting in $uv$, we have $H\subseteq G_1$.
\item A triangle $T$ in $G$ does not bound a face, $G_2$ is the subgraph of $G$ drawn in the closed disk bounded
by $T$, and $G_1=G-(V(G_2)\setminus V(T))$.  To see that $S_2$ is valid, note that the condition (Gc) holds
since all vertices $x\in V(G_2)\setminus V(T)$ satisfy $|L_1(x)|\ge 10$.
\end{itemize}
Therefore, we can assume that every triangle in $G$ bounds a face, and
if the outer face of $G$ has a chord, then $H$ is a $3$-vertex path and every chord of the outer
face of $G$ is incident with the middle vertex of $H$.

Finally, consider the case $H$ is not a $3$-vertex path.  Then either $H$ is a path with at most two
vertices, or a triangle bounding the outer face of $G$.  In the latter case, we obtain contradiction
by considering the scene $S'=(G-e,H-e,L_1,L_2,\varphi_0,\vec{\delta})$ for an edge $e$ of $H$;
note that by Observation~\ref{obs:reorder}, the order of recolorings on $H-e$ can be chosen to match the one
in a witness of $\vec{\delta}$ on $H$, ensuring that a trajectory of $S'$ is also a trajectory of $S$.
In the former case, let $v$ be a vertex in the outer face of $G$ not belonging to $H$ and (unless $V(H)=\emptyset$)
adjacent to a vertex of $H$.  Let $H'=G[V(H)\cup\{v\}]$.  Note that since the outer face of $G$ does not
have a chord, if $|V(H')|=3$, then any vertex $x$ adjacent to all vertices of $H'$ is internal and satisfies $|L_1(x)|\ge 10$.
Let us now extend $\vec{\delta}$ to an $(L_1,L_2)$-trajectory $\vec{\psi}$ of $H'$ by setting $\psi_0(v)=\varphi_0(v)$
and for $i\in\{1,2\}$, letting $\psi_i(v)$ be a color in $L_i(v)\setminus\bigcup_{u\in V(H)} \{\delta_{i-1}(u),\delta_i(u)\}$;
such a color exists, since $|L_i(v)|>4$.  This is indeed a trajectory, since we can recolor $v$ to $\psi_i(v)$ before
performing the rest of the recolorings from $\psi_{i-1}$ to $\psi_i$ (and thus both sequences of the resulting witness
are $H$-late).  A trajectory for $(G,H',L_1,L_2,\varphi_0,\vec{\psi})$ is also a trajectory for $S$, contradicting
the minimality of $S$.
\end{proof}

Actually, we can eliminate all chords of the outer face of a minimal counterexample.
\begin{lemma}\label{lemma:indu}
Suppose $S=(G,H,L_1,L_2,\varphi_0,\vec{\delta})$ is a minimal counterexample.
Then the outer face of $G$ is bounded by an induced cycle.
\end{lemma}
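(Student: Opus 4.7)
The plan is to argue by contradiction, exploiting the structural information of Lemma~\ref{lemma:basprop}: if the outer face of $G$ has a chord, then $H$ is a $3$-vertex path $h_1h_2h_3$ and every such chord is incident with $h_2$. Let $h_2v$ be such a chord; together with the outer cycle $C$ it splits $G$ into two plane subgraphs $G_1, G_2$ meeting exactly in the edge $h_2v$, with $h_1 \in V(G_1)$ and $h_3 \in V(G_2)$. Set $H_1 = h_1h_2v \subseteq G_1$ and $H_2 = vh_2h_3 \subseteq G_2$; each $H_j$ is an induced $3$-vertex path consecutive on the outer face of $G_j$. Since $h_{4-j} \in V(G_{3-j}) \setminus V(G_j)$, we have $|V(G_j)|+|E(G_j)| < |V(G)|+|E(G)|$, which is the handle for invoking minimality of~$S$ on each side.

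The key step is to choose colors $\delta_1(v) \in L_1(v)$ and $\delta_2(v) \in L_2(v)$ so that, with $\delta_0(v) = \varphi_0(v)$ and $\vec{\delta}^{\,j}$ the extension of $\vec{\delta}\restriction (H \cap H_j)$ by these values, each scene $S_j = (G_j, H_j, L_1, L_2, \varphi_0\restriction V(G_j), \vec{\delta}^{\,j})$ is valid and each $\vec{\delta}^{\,j}$ is realised by once-only proper sequences on $H_j$. Properness in $H_j$ forces $\delta_1(v) \neq \delta_1(h_2)$; realizability of a once-only sequence on $H_j$ may additionally force $\delta_1(v) \neq \delta_0(h_2)$ (exactly when $\delta_0(v) = \delta_1(h_2)$, so that $v$ must be recolored before $h_2$); and, for each $j$, condition~(Gc) forbids at most one ``bad'' color $c_j^\ast$. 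For (Gc), any witness vertex $x_j$ of $G_j$ adjacent to all three vertices of $H_j$ either is internal in $G$ (whence $|L_1(x_j)| \ge 10$ and (Gc) is automatic since the right side of~(Gc) has size at most $6$) or lies on the outer face of $G$, in which case Lemma~\ref{lemma:basprop} (all chords through $h_2$, all triangles bound faces) forces $x_j$ to be the unique vertex of $G_j$ whose triangle $x_j h_1 h_2$ (resp.\ $x_j h_2 h_3$) bounds the inner face of $G_j$ incident with $h_1 h_2$ (resp.\ $h_2 h_3$). Thus $\delta_1(v)$ has at most $1+1+2 = 4$ forbidden values, leaving $\ge 6-4 = 2$ valid choices in $L_1(v)$. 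The analogous count for $\delta_2(v)$ is easier, with at most $2$ forbidden colors in $L_2(v)$ of size $\ge 7$, and (Gc) does not apply.

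With such choices, each $S_j$ is valid, so by minimality of $S$ each has a trajectory $\vec{\psi}^{\,j}$. By Observation~\ref{obs:reorder} we may arrange that the witnesses of $\vec{\delta}^{\,j}$ lifting to $\vec{\psi}^{\,j}$ agree on the recolorings of $\{h_2, v\}$; then $\vec{\psi}^{\,1}$ and $\vec{\psi}^{\,2}$ coincide on $V(G_1 \cap G_2)$, so the triple $\vec{\varphi} = (\psi^{1}_i \cup \psi^{2}_i)_{i=0,1,2}$ is a well-defined sequence of proper colorings of $G$ with $\vec{\varphi}\restriction V(H) = \vec{\delta}$. Assembling the witnesses in the spirit of the proof of Lemma~\ref{lem:total} --- first the interior recolorings of $G_1$, then the interior recolorings of $G_2$, then the recoloring of $v$ (which belongs to $V(G) \setminus V(H)$), and finally the $H$-recolorings from the witness of $\vec{\delta}$ --- produces an $H$-late once-only witness that $\vec{\delta}$ lifts to $\vec{\varphi}$ in $G$, contradicting $S$ being a counterexample.

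The main obstacle I expect is the color-counting for $\delta_1(v)$ when a candidate vertex $x_j$ on the outer face actually achieves equality in~(Gc); the structural consequences of Lemma~\ref{lemma:basprop} must be combined carefully to guarantee uniqueness of such $x_j$ on each side and keep the forbidden count at most four. A secondary point is that $v \in H_j \setminus H$, so the $v$-recoloring lives in the $H_j$-late segment on each side but must be placed in the $H$-early segment of the combined witness --- a reordering handled exactly as in the proofs of Observation~\ref{obs:reorder} and Lemma~\ref{lem:total}.
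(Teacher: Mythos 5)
Your proposal follows the same high-level route as the paper: split $G$ along the chord $h_2v$ into $G_1,G_2$, extend $\vec{\delta}$ by choosing $\delta_1(v),\delta_2(v)$, apply minimality to both halves, and reassemble an $H$-late witness. However, the color-counting for $\delta_1(v)$ has a genuine gap.

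First, your count tacitly assumes $v$ is adjacent in $H_j$ only to $h_2$, but nothing prevents $v$ from being $K$-adjacent to $h_1$ or $h_3$ (even to both, when the outer cycle has length $4$ --- a case not yet excluded at this point, since Lemma~\ref{lemma:long} is proved afterwards). In that situation $H_j$ is a triangle, not a path, and the properness and ordering constraints contribute extra colors: up to $\{\delta_0(h_1),\delta_1(h_1),\delta_0(h_2),\delta_1(h_2),\delta_0(h_3),\delta_1(h_3)\}$, which can be $6$ colors, leaving potentially nothing in $L_1(v)$. The paper handles exactly this via condition~(Gc) of the \emph{original} scene $S$ applied to $v$; your argument only invokes~(Gc) for $S_1$ and $S_2$ and never for $S$, so this case falls through. (You also do not note that when $vh_j\in E(G)$ the bad~(Gc) vertex $z$ on that side cannot exist, which is precisely the observation that keeps the total at $\le 6$ rather than $\ge 7$.)

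Second, your conditional constraint --- forbidding $\delta_0(h_2)$ \emph{only} when $\delta_0(v)=\delta_1(h_2)$ --- is inconsistent with your own assembly. You explicitly construct the combined witness with $v$'s recoloring placed \emph{before} all $H$-recolorings, which $H$-lateness requires since $v\notin V(H)$. At that moment $h_2$ still carries $\delta_0(h_2)$, so you need $\delta_1(v)\neq\delta_0(h_2)$ unconditionally. If $\delta_0(v)\neq\delta_1(h_2)$ and you happened to pick $\delta_1(v)=\delta_0(h_2)$, the reassembly is not a proper sequence (and Observation~\ref{obs:reorder} cannot move $v$ to the front of the $H_j$-segment either, since $\vec{\delta}^j$ then has no witness with $v$ first). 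The paper sidesteps this by always forbidding both $\delta_{i-1}(u)$ and $\delta_i(u)$ for every $u\in V(H)$ adjacent to $v$, and then argues that when all $6$ are forbidden one falls back on~(Gc). Your analogous count for $\delta_2(v)$ (``at most $2$ forbidden colors'') has the same omission. These are fixable, but as written the proposal undercounts the forbidden colors and the assembly step does not go through in all cases.
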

\begin{proof}
Suppose for contradiction this is not the case.  By Lemma~\ref{lemma:basprop}, this implies
$H=h_1h_2h_3$ is a $3$-vertex path and the outer face of $G$ has a chord $h_2v$.
Let $G=G_1\cup G_2$, where $G_1$ and $G_2$ are proper induced subgraphs of $G$
intersecting in $h_2v$ and with $h_1\in V(G_1)$.  Let $K$ be the cycle bounding the outer face of $G$.

Let $\vec{\psi}$ be an $(L_1,L_2)$-trajectory in $H'=G[V(H)\cup\{v\}]$ extending $\vec{\delta}$ chosen
as follows.  We set $\psi_0(v)=\varphi_0(v)$.  For $i=1,2$, let $\psi_i(v)\in L_i(v)$ be chosen
so that
\begin{itemize}
\item[(i)] if $uv\in E(G)$ for $u\in V(H)$, then $\psi_i(v)\not\in\{\delta_{i-1}(u),\delta_i(u)\}$, and
\item[(ii)] if $i=1$, $z$ is a common neighbor of $v$, $h_2$, and $h_j$ for some $j\in\{1,3\}$,
and $L_1(z)=\{\varphi_0(v),\delta_0(h_2),\delta_1(h_2),\delta_0(h_j),\delta_1(h_j), c\}$,
then $\psi_1(v)\neq c$.
\end{itemize}
By planarity and since $|L_1(z)|=6$ is only possible for vertices $z\in V(K)$, note that in the
latter case, $vh_j\not\in E(G)$.  Therefore, these conditions either forbid at most five colors
in total, or $v$ is a common neighbor of all vertices of $H$ and the condition (i) forbids
six colors in total.  It follows that $\psi_i(v)$ can be chosen as described, since $|L_1(v)|\ge 6$,
$|L_2(v)|\ge 7$, and not all colors in $L_1(v)$ can be forbidden due to the assumption (Gc).
Note that the condition (i) ensures that $\vec{\delta}$ lifts to the trajectory $\vec{\psi}$ in $H'$
(for $i\in\{1,2\}$, we can recolor $v$ before the recolorings from $\delta_{i-1}$ to $\delta_i$);
let $(\pi_1,\pi_2)$ witness this.

For $j\in\{1,2\}$, let $S_j=(G_j,H'\cap G_j, L_1, L_2, \varphi_0\restriction V(G_j), \vec{\psi}\restriction (H'\cap G_j))$.
Note $S_j$ is a valid scene; the condition (ii) ensures $S_j$ satisfies (Gc).  By the minimality
of $S$, $S_j$ has a trajectory $\vec{\psi}^j$, witnessed by $(\sigma_{j,1},\sigma_{j,2})$.
For $i\in\{1,2\}$, let $\sigma_i=\sigma^{G_1-V(H')}_{1,i}+\sigma^{G_2-V(H')}_{2,i}+\pi_i$.
Then $(\sigma_1,\sigma_2)$ is a witness of a trajectory of $S$, which is a contradiction.
\end{proof}

Now, we restrict adjacencies of the internal vertices to vertices incident with the outer face.
\begin{lemma}\label{lemma:few}
Suppose $S=(G,H,L_1,L_2,\varphi_0,\vec{\delta})$ is a minimal counterexample, $H=h_1h_2h_3$, $K$ is the cycle
bounding the outer face of $G$, and $v$ is an internal vertex of $G$.  Then $v$ does not have two non-adjacent
neighbors in $K-h_2$.
\end{lemma}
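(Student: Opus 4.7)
The plan is a proof by contradiction. Suppose that some internal vertex $v$ has neighbors $v_1,v_2\in V(K)\setminus\{h_2\}$ with $v_1v_2\notin E(G)$. Since $K$ is induced by Lemma~\ref{lemma:indu}, $v_1$ and $v_2$ are non-consecutive on $K$, and the 3-vertex path $P=v_1vv_2$ together with either arc of $K$ between $v_1$ and $v_2$ bounds a closed disk in the planar embedding of $G$. Hence $G=G_1\cup G_2$ for proper induced subgraphs $G_1,G_2$ meeting in $G_1\cap G_2=P$, and since $h_2$ lies in one of the two arcs we may assume $H\subseteq G_1$.

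The approach is to apply the minimality of $S$ to two smaller scenes. First, the scene $S_1=(G_1,H,L_1,L_2,\varphi_0\restriction V(G_1),\vec{\delta})$ is valid: lists and their sizes are unchanged, a vertex internal in $G_1$ is internal in $G$, and a vertex of $G_1$ adjacent to all of $V(H)$ is also such in $G$. This yields a trajectory $\vec{\psi}^1$. Second, the scene $S_2=(G_2,P,L_1,L_2,\varphi_0\restriction V(G_2),\vec{\psi}^1\restriction P)$ has $|V(P)|=3$ with the three vertices consecutive on the outer face of $G_2$; conditions (Ga) and (Gb) carry over as before. Provided $S_2$ is valid, minimality yields a trajectory $\vec{\psi}^2$, and Lemma~\ref{lem:total} combines $\vec{\psi}^1$ and $\vec{\psi}^2$ into a trajectory of $S$, contradicting that $S$ is a counterexample.

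The main obstacle is (Gc) for $S_2$: for each common neighbor $w\in V(G_2)$ of $v_1,v,v_2$ one needs $L_1(w)\ne\{\varphi_0(v_1),\varphi_0(v),\varphi_0(v_2),\psi^1_1(v_1),\psi^1_1(v),\psi^1_1(v_2)\}$. If $w$ is internal in $G_2$ then it is internal in $G$, so $|L_1(w)|\ge 10$ and the condition is trivial. Otherwise $w\in V(K)$; adjacency of $w$ to both $v_1$ and $v_2$ together with induced-ness of $K$ forces $v_1,w,v_2$ to be consecutive on $K$, and planarity pins $w$ down uniquely. Applying Lemma~\ref{lemma:basprop} to the triangles $vwv_1$ and $vwv_2$ (each bounds a face) then shows $G_2$ consists precisely of $\{v_1,v_2,v,w\}$ joined by the five edges $vv_1,vv_2,vw,wv_1,wv_2$, and in that case (Gc) can fail only when $|L_1(w)|=6$ and $L_1(w)$ equals the six-element set above.

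To rule out this failure, before invoking minimality on $G_1$ I would fix a color $c\in L_1(w)\setminus\bigl(\{\varphi_0(v_1),\varphi_0(v),\varphi_0(v_2)\}\cup L_1(v_1)\cup L_1(v_2)\bigr)$ and apply minimality instead to $S_1'$, the modification of $S_1$ in which $L_1(v)$ is replaced by $L_1(v)\setminus\{c\}$. Since $v$ lies on the outer face of $G_1$ only $|L_1'(v)|\ge 6$ is required, and $|L_1(v)|\ge 10$ gives $|L_1'(v)|\ge 9$; moreover (Gc) at $v$ in $S_1'$ holds automatically because $|L_1'(v)|\ge 9>6$. The resulting trajectory $\vec{\psi}^1$ then satisfies $\psi^1_1(v)\ne c$, and as $c\notin L_1(v_j)\supseteq\{\psi^1_1(v_j)\}$ for $j\in\{1,2\}$ and $c\ne\varphi_0(v_1),\varphi_0(v),\varphi_0(v_2)$, the color $c$ lies in $L_1(w)$ but outside the six-element set, securing (Gc) for $S_2$. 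The delicate point I expect to be hardest is the corner case when no such $c$ exists, i.e.\ $L_1(w)\subseteq\{\varphi_0(v_1),\varphi_0(v),\varphi_0(v_2)\}\cup L_1(v_1)\cup L_1(v_2)$; there one must exploit the very restricted 4-vertex structure of $G_2$ to build $\vec{\psi}^2$ directly by hand and glue it to $\vec{\psi}^1$ without appealing to minimality for $S_2$.
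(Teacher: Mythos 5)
Your overall strategy matches the paper's: split $G=G_1\cup G_2$ along the induced path $P=v_1vv_2$, apply minimality to a scene on $G_1$ with $H$, then to a scene on $G_2$ with $P$, and glue with Lemma~\ref{lem:total}. You also correctly identify the one real obstacle, namely verifying (Gc) for $S_2$ when some vertex $z$ on $K$ is a common neighbor of $v_1,v,v_2$.

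Where your argument has a gap is in the mechanism you use to guarantee (Gc). You propose fixing a single color $c\in L_1(z)\setminus\bigl(\{\varphi_0(v_1),\varphi_0(v),\varphi_0(v_2)\}\cup L_1(v_1)\cup L_1(v_2)\bigr)$ and removing it from $L_1(v)$, so that $\psi^1_1(v)\ne c$ while $c$ automatically differs from $\psi^1_1(v_1),\psi^1_1(v_2)$. But $|L_1(v_1)\cup L_1(v_2)|$ can be as large as $20$ while $|L_1(z)|$ need only be $6$, so such a $c$ may well not exist; you acknowledge this and defer the remaining case to an unexecuted ``by hand'' analysis of the four-vertex $G_2$. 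That is precisely the case one cannot afford to leave open.

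The paper sidesteps this cleanly by not trying to control $\psi_1(v_1)$ or $\psi_1(v_2)$ at all. It chooses a three-element set $C\subseteq L_1(z)\setminus\{\varphi_0(v),\varphi_0(v_1),\varphi_0(v_2)\}$ (possible since $|L_1(z)|\ge 6$), and applies minimality to $S_1$ with $L'_1(v)=L_1(v)\setminus C$; as $v$ is internal in $G$, $|L_1(v)|\ge 10$, so $|L'_1(v)|\ge 7\ge 6$ and (Ga) still holds. Then $\psi_1(v)\in L'_1(v)$ forces $\psi_1(v)\notin C$, so $C\setminus\{\psi_1(v),\psi_1(v_1),\psi_1(v_2)\}$ discards at most two elements of $C$ and is nonempty; any surviving color lies in $L_1(z)$ and outside the six-element forbidden set, verifying (Gc) for $S_2$. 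The moral is that removing three colors from the internal vertex $v$ (which has slack in its list) is strictly easier than finding one color outside the union of two boundary lists, and it fully closes the case you left open.
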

\begin{proof}
Suppose for contradiction that $G=G_1\cup G_2$, where $G_1$ and $G_2$ are proper induced subgraphs of $G$
intersecting in an induced path $v_1vv_2$, with $v_1,v_2\in V(K-h_2)$ and $H\subset G_1$.

Let $L'_1$ be the list assignment obtained from $L_1$ by changing the list of $v$
as follows.  If $v$, $v_1$, and $v_2$ have a common neighbor $z\in V(K-h_2)$, then let $C$
be a set of three colors belonging to $L_1(z)\setminus\{\varphi_0(v),\varphi_0(v_1),\varphi_0(v_2)\}$,
and let $L'_1(v)=L_1(v)\setminus C$, otherwise let $L'_1(v)=L_1(v)$.
Note that $|L'_1(v)|\ge 7$ and $|L'_1(v)\setminus L_2(v)|\le |L_1(v)\setminus L_2(v)|\le 1$.
Hence, $S_1=(G_1,H,L'_1,L_2,\varphi_0,\vec{\delta})$ is a valid scene, and by the minimality of $S$,
it has a trajectory $\vec{\psi}$.

Consider the scene $S_2=(G_2,v_1vv_2,L_1,L_2,\vec{\psi}\restriction v_1vv_2)$.
We claim this scene is valid.  It suffices to verify the condition (Gc) in the case $v$, $v_1$, and $v_2$ have a common neighbor $z\in V(K-h_2)$.
To do so, note that
$$L_1(z)\setminus \{\varphi_0(v),\psi_1(v),\varphi_0(v_1),\psi_1(v_1),\varphi_0(v_2),\psi_1(v_2)\}\supseteq C\setminus\{\psi_1(v),\psi_1(v_1),\psi_1(v_2)\}\neq\emptyset,$$
since $|C|=3$, $\psi_1(v)\in L'_1(v)$, and $L'_1(v)\cap C=\emptyset$.  Therefore, by the minimality of $S$, $S_2$ has a trajectory.
Lemma~\ref{lem:total} implies $S$ has a trajectory, which is a contradiction.
\end{proof}

Let us deal with one more simple case.
\begin{lemma}\label{lemma:long}
If $S=(G,H,L_1,L_2,\varphi_0,\vec{\delta})$ is a minimal counterexample, then the cycle $K$ bounding the outer
face of $G$ has length at least five.
\end{lemma}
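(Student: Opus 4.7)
The plan is to derive a contradiction by ruling out $|K|\le 4$. For $|K|=3$, Lemma~\ref{lemma:basprop} gives that $H$ is a $3$-vertex induced path with $V(H)\subseteq V(K)$; since $|V(H)|=|V(K)|=3$, we have $V(H)=V(K)$, and so $H=G[V(K)]$ is a triangle rather than a path, a contradiction.

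For $|K|=4$, write $K=h_1h_2h_3h_4$; by Lemma~\ref{lemma:indu} we have $h_1h_3,h_2h_4\notin E(G)$. The first step is to extend $\vec\delta$ to a trajectory $\vec\psi$ on $V(K)$ by setting $\psi_0(h_4)=\varphi_0(h_4)$ and, for $i\in\{1,2\}$, choosing $\psi_i(h_4)\in L_i(h_4)\setminus\{\delta_{i-1}(h_1),\delta_i(h_1),\delta_{i-1}(h_3),\delta_i(h_3)\}$; since $|L_1(h_4)|\ge 6$ and $|L_2(h_4)|\ge 7$ while at most four colors are forbidden, such choices exist. Because $h_4$'s only $K$-neighbors are $h_1$ and $h_3$, these constraints let us place the recoloring of $h_4$ at the start of each phase, so that $\vec\delta$ lifts to the $(L_1,L_2)$-trajectory $\vec\psi$ on $V(K)$.

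If $V(G)=V(K)$, then $\vec\psi$ is itself a trajectory of $S$ (the $\vec\delta$-recolorings on $H$ produce only colors different from $\psi_i(h_4)$ by construction), contradicting that $S$ is a counterexample. Otherwise $V(G)\supsetneq V(K)$, and I would form the rotated scene $S'=(G,H',L_1,L_2,\varphi_0,\vec\psi\restriction H')$ with $H'=h_4h_1h_2$, a valid induced $3$-vertex path of consecutive outer-face vertices (since $h_2h_4\notin E(G)$) for which (Gc) is automatic because any common neighbor of $V(H')$ is internal and has $|L_1|\ge 10$. The main obstacle is that $S'$ has the same $|V(G)|+|E(G)|$ as $S$, so minimality does not apply directly; the plan is to argue by cases, either obtaining a trajectory of $S'$ and reconciling it with $S$ via Observation~\ref{obs:reorder} (using that $\vec\psi$ coherently orders recolorings on $V(K)$ for both lateness conditions), or treating $S'$ as a minimal counterexample so that Lemma~\ref{lemma:few} applied to $S'$ (with middle vertex $h_1$) forces no internal vertex to have both $h_2,h_4$ as neighbors. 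Combining this with the original Lemma~\ref{lemma:few} (no internal vertex has both $h_1,h_3$) means internal vertices have only a pair of adjacent outer-face neighbors, and then Lemma~\ref{lem:total} applied to a suitable decomposition of $G$ should reduce to the already-resolved case $V(G)=V(K)$.
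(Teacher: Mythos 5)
Your handling of $|K|=3$ and your choice of colors for $h_4$ (the paper's $v$) are both fine, and the intuition that $h_4$'s recoloring can be placed at the start of each phase is correct. But the main body of the argument does not go through. The ``rotation'' to $S'=(G,H',\ldots)$ is a dead end, as you yourself note: $S'$ has exactly the same $|V(G)|+|E(G)|$ as $S$, so minimality gives you nothing, and the two fallback plans you sketch do not repair this. The first (``obtain a trajectory of $S'$ and reconcile it with $S$ via Observation~\ref{obs:reorder}'') presupposes that $S'$ has a trajectory, which is precisely what you cannot conclude; moreover, an $H'$-late sequence is not automatically $H$-late when $H\neq H'$, so the reconciliation is not just a reordering within $K$. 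The second (``treat $S'$ as a minimal counterexample and apply Lemma~\ref{lemma:few} to it'') is circular, since $S'$ has not been shown to be a counterexample at all. Even granting both applications of Lemma~\ref{lemma:few}, you would only have excluded internal vertices adjacent to the two diagonal pairs of $K$; an internal vertex adjacent to two consecutive vertices of $K$ is still allowed, and the concluding claim that some application of Lemma~\ref{lem:total} then ``reduces to $V(G)=V(K)$'' is not an argument.

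The missing idea is to make the induction go on a \emph{smaller} graph while controlling conflicts with $h_4$ through the lists. The paper deletes $v=h_4$, then shrinks the lists of each internal neighbor $u$ of $v$: $L'_1(u)=L_1(u)\setminus\{c_0,c_1,c_2\}$ and $L'_2(u)=L_2(u)\setminus\{c_1,c_2\}$, where $c_0=\varphi_0(v)$ and $c_1,c_2$ are the chosen phase colors for $v$. Since internal $u$ has $|L_1(u)|\ge 10$ and $|L_2(u)|\ge 9$, the shrunk lists still have size at least $7$, and $|L'_1(u)\setminus L'_2(u)|\le 1$, so the scene on $G-v$ with $H$ unchanged is valid and strictly smaller. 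A trajectory there avoids $c_0,c_1,c_2$ on $v$'s neighbors by construction, so one can splice in the recoloring $(v,c_i)$ between the recolorings of $G-V(H)-v$ and the recolorings of $H$, yielding a trajectory of $S$. Your proposal never shrinks the lists and never deletes $v$, so it has no way to invoke minimality and no way to guarantee that interior recolorings avoid $v$'s three colors.
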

\begin{proof}
By Lemma~\ref{lemma:basprop}, $H=h_1h_2h_3$ is a $3$-vertex induced path, and thus $|K|\ge 4$.
Suppose for a contradiction $K=h_1h_2h_3v$.  For $i\in \{1,2\}$, choose
$c_i\in L_i(v)\setminus \{\delta_{i-1}(h_1),\delta_i(h_1), \delta_{i-1}(h_3),\delta_i(h_3)\}$ arbitrarily.
Let $c_0=\varphi_0(v)$.
Let $L'_1$ and $L'_2$ be list assignments obtained from $L_1$ and $L_2$ by, for each internal vertex $u$ adjacent to $v$,
setting $L'_1(u)=L_1(u)\setminus\{c_0,c_1,c_2\}$ and $L'_2(u)=L_2(u)\setminus\{c_1,c_2\}$.
Note that $|L'_1(u)|,|L'_2(u)|\ge 7$ and $|L'_1(u)\setminus L'_2(u)|\le |L_1(u)\setminus L_2(u)|\le 1$,
and thus $S'=(G-v, H, L'_1, L'_2, \varphi_0,\vec{\delta})$ is a valid scene.  By the minimality of $S$,
$S'$ has a trajectory $\vec{\psi}$ witnessed by sequences $(\pi_1,\pi_2)$.
For $i\in\{1,2\}$, let $\sigma_i=\pi_i^{G-(V(H)\cup\{v\})}+(v,c_i)+\pi_i^H$.
Note that the sequence $\pi_i^{G-(V(H)\cup\{v\})}$ does not recolor any neighbor $u$ of $v$
to $c_{i-1}$ (the current color of $v$) or to $c_i$ (the color $v$ is to be recolored to), since
$c_{i-1},c_i\not\in L'_i(u)$.  Moreover, after recoloring $v$ to $c_i$, we can recolor $H$ from $\delta_{i-1}$ to $\delta_i$
by the sequence $\pi_i^H$ by the choice of $c_i$.  Therefore, $(\sigma_1,\sigma_2)$ witnesses a trajectory of $S$,
which is a contradiction.
\end{proof}

Consider a graph $K$ and its induced subgraph $H$.  We say that a trajectory $\vec{\psi}$ in $K$ is \emph{stable outside of $H$}
if $\psi_2(v)=\psi_1(v)$ for every $v\in V(K)\setminus V(H)$, i.e., the coloring outside of $H$ only changes once,
from $\psi_0\restriction (V(K)\setminus V(H))$ to $\psi_1\restriction (V(K)\setminus V(H))$.
As a final preparatory step, let us show the existence of a stable trajectory for the outer face of a minimal counterexample.
\begin{lemma}\label{lemma:stable}
Let $S=(G,H,L_1,L_2,\varphi_0,\vec{\delta})$ be a minimal counterexample and let $K$ be the cycle bounding the outer
face of $G$.  Then $\vec{\delta}$ lifts to an $(L_1,L_2)$-trajectory in $K$ starting with $\varphi_0\restriction V(K)$ and stable outside of $H$.
\end{lemma}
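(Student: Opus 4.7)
The plan is to construct $\psi_1$ explicitly on $V(K)\setminus V(H)$ and then set $\psi_2:=\psi_1$ there, so that phase~$2$ only recolors vertices of $H$. By Lemmas~\ref{lemma:basprop}, \ref{lemma:indu} and~\ref{lemma:long}, we may write $K=h_1h_2h_3v_1v_2\cdots v_kh_1$ with $H=h_1h_2h_3$ induced, $K$ induced, and $k\ge 2$. Set $\psi_0:=\varphi_0\restriction V(K)$ and $\psi_i\restriction V(H):=\delta_i$ for $i\in\{1,2\}$. Assumption~(Ga) gives $|L_1(v)\cap L_2(v)|\ge 5$ for every $v\in V(K)$, since $|L_1(v)\setminus L_2(v)|\le 1$ and $|L_1(v)|\ge 6$.

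Process the vertices $v_1,v_2,\ldots,v_k$ in order, choosing each $\psi_1(v_i)\in L_1(v_i)\cap L_2(v_i)$ subject to avoiding a short list of colors. For $v_1$, forbid $\delta_0(h_3)$, $\delta_1(h_3)$, $\delta_2(h_3)$, and $\varphi_0(v_2)$; for $v_i$ with $1<i<k$, forbid $\psi_1(v_{i-1})$ and $\varphi_0(v_{i+1})$; for $v_k$, forbid $\psi_1(v_{k-1})$, $\delta_0(h_1)$, $\delta_1(h_1)$, and $\delta_2(h_1)$. In every case at most four colors are excluded from a list of size at least five, so a valid choice always exists. Define $\psi_2(v_i):=\psi_1(v_i)$ for all $i$.

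Let $(\pi_1,\pi_2)$ be any witness of $\vec{\delta}$ on $H$. Take $\sigma_1$ to be the sequence that recolors $v_1,v_2,\ldots,v_k$ in this order (skipping indices where $\psi_1(v_i)=\varphi_0(v_i)$) and then appends $\pi_1$, and take $\sigma_2:=\pi_2$. Both sequences are once-only and $H$-late by construction. Properness while $\sigma_1$ processes $v_i$ is immediate: its neighbors on $K$ are colored $\psi_1(v_{i-1})$ (or $\delta_0(h_3)$ for $i=1$) on one side and $\varphi_0(v_{i+1})$ (or $\delta_0(h_1)$ for $i=k$) on the other, all of which were forbidden. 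During the execution of $\pi_j$, the colors of $h_1$ and $h_3$ stay in $\{\delta_{j-1}(\cdot),\delta_j(\cdot)\}$, and $\psi_1(v_k),\psi_1(v_1)$ were chosen to avoid precisely these. Thus $\vec{\psi}=(\psi_0,\psi_1,\psi_2)$ is the desired stable trajectory.

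The main obstacle is the tight count at the extreme vertices: five colors available in $L_1\cap L_2$ against up to four forbidden. This is why~(Ga) demands $|L_1|\ge 6$, and why the constraint $\psi_1(v_1)\ne\delta_2(h_3)$ (needed so that $\psi_2$ remains a proper $L_2$-coloring once we commit to $\psi_2(v_1)=\psi_1(v_1)$) must be absorbed into the phase-1 choice rather than postponed. The conclusion $k\ge 2$ from Lemma~\ref{lemma:long} is crucial, so that no single vertex $v_i$ is simultaneously subject to the $v_1$- and $v_k$-style constraints, which together would forbid up to seven colors.
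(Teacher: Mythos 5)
Your construction matches the paper's proof essentially verbatim: same reduction to $L(v)=L_1(v)\cap L_2(v)$ of size at least $5$, same greedy choice of $\psi_1$ along $v_1,\ldots,v_k$ with the same sets of forbidden colors at the two extremal vertices and the two-color exclusion in the middle, same ordering of the recoloring sequence, and same appeal to Lemma~\ref{lemma:long} to guarantee $k\ge 2$. Correct, and the same approach as the paper.
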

\begin{proof}
For each $v\in V(K)\setminus V(H)$, let $L(v)=L_1(v)\cap L_2(v)$.  Since $|L_1(v)\setminus L_2(v)|\le 1$,
we have $|L(v)|=|L_1(v)|-|L_1(v)\setminus L_2(v)|\ge 5$.  Let $K=h_1h_2h_3v_1v_2\ldots v_m$,
where $m\ge 2$ by Lemma~\ref{lemma:long}.  Let us define a coloring $\psi_1$ of $K$ extending $\delta_1$
as follows.  Choose $\psi_1(v_1)\in L(v_1)\setminus \{\delta_0(h_3),\delta_1(h_3),\delta_2(h_3),\varphi_0(v_2)\}$
arbitrarily.  For $i=2,\ldots, m-1$, choose $\psi_1(v_i)\in L(v_i)\setminus \{\psi_1(v_{i-1}), \varphi_0(v_{i+1})\}$ arbitrarily.
Finally, choose $\psi(v_m)\in L(v_m)\setminus\{\psi_1(v_{m-1}),\delta_0(h_1),\delta_1(h_1),\delta_2(h_1)\}$ arbitrarily.
We can recolor $K$ from $\varphi_0\restriction V(K)$ to $\psi_1$ by recoloring $v_1$, \ldots, $v_m$
in order, followed by recoloring of $H$ from $\delta_0$ to $\delta_1$.  Moreover, the choice of $\psi_1(v_1)$ and $\psi_1(v_m)$
ensures this can be followed by recoloring of $H$ from $\delta_1$ to $\delta_2$.  This gives a witness of
an $(L_1,L_2)$-trajectory in $K$ starting with $\varphi_0\restriction V(K)$ and stable outside of $H$.
\end{proof}

We are now ready to prove the main list-recoloring result.
\begin{theorem}\label{thm:9colors}
Every valid scene has a trajectory.
\end{theorem}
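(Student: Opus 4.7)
The plan is by contradiction: suppose $S = (G, H, L_1, L_2, \varphi_0, \vec{\delta})$ is a minimal counterexample, and let $K$ denote the cycle bounding the outer face of $G$. By Lemmas~\ref{lemma:basprop}--\ref{lemma:stable} we may assume $G$ is $2$-connected, $K$ is an induced cycle of length at least five containing $H = h_1h_2h_3$, every triangle of $G$ bounds a face, every internal vertex has at most two neighbors in $K - h_2$ (consecutive on $K$ if both present), and $\vec{\delta}$ lifts to an $(L_1, L_2)$-trajectory $\vec{\psi}$ on $K$ that is stable outside of $H$. I will combine such a $\vec{\psi}$ with a trajectory in $G' = G - V(K)$ produced by induction on $|V(G)| + |E(G)|$, via Lemma~\ref{lem:total}.

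To this end I fix such a $\vec{\psi}$ and define reduced list assignments $L'_1, L'_2$ on $G'$ as follows: for each internal vertex $x$ and each neighbor $v \in V(K)$ of $x$, I remove from $L_1(x)$ the colors $v$ attains during phase~$1$ of $\vec{\psi}$ --- namely $\{\varphi_0(v), \psi_1(v)\}$ if $v \in V(K-H)$ and $\{\delta_0(v), \delta_1(v)\}$ if $v \in V(H)$ --- and from $L_2(x)$ the colors $v$ attains during phase~$2$, which by the stability of $\vec{\psi}$ outside $H$ is just $\{\psi_1(v)\}$ when $v \in V(K-H)$ (and $\{\delta_1(v), \delta_2(v)\}$ when $v \in V(H)$). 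After choosing a $3$-vertex induced path $H'$ on the new outer face of $G'$ and reading off $\vec{\delta}\,'$ from $\vec{\psi}$, I obtain a candidate scene $S'$ on $G'$. If $S'$ is valid, then its trajectory (from minimality of $S$) combines with $\vec{\psi}$ via Lemma~\ref{lem:total} into a trajectory of $S$, the desired contradiction.

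The main obstacle is verifying validity of $S'$, especially conditions (Ga) and (Gc). Setting $a = |N(x) \cap V(H)|$ and $b = |N(x) \cap V(K-H)|$ for an internal vertex $x$, one checks $|L'_1(x)| \ge |L_1(x)| - 2(a+b)$ and $|L'_2(x)| \ge |L_2(x)| - (2a+b)$, so the required bounds $|L'_1(x)| \ge 6$ and $|L'_2(x)| \ge 7$ hold exactly when $a + b \le 2$ and $2a + b \le 2$ respectively. When this fails --- for instance when $x$ has three $K$-neighbors (which by Lemma~\ref{lemma:few} must consist of $h_2$ together with two consecutive vertices of $K - h_2$ forming a triangular face with $x$), or when $x$ has a neighbor in $V(H)$ together with another neighbor in $V(K)$ --- I plan to handle $x$ by a split argument: the internal edges from $x$ to its $K$-neighbors carve $G$ into two smaller planar pieces meeting in a $3$-vertex induced path through $x$, and induction applied to each piece and recombined via Lemma~\ref{lem:total} dispatches the case. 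Condition (Gc) on $S'$ can be arranged by imposing, at the relevant step of the construction in Lemma~\ref{lemma:stable}, one additional forbidden color per potentially problematic vertex $z$ of $G'$; the slack in the list sizes there comfortably accommodates this.
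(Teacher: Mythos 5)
Your approach diverges from the paper's at the single most important choice: you set $G' = G - V(K)$ (discarding $H$ along with the rest of $K$) and read reduced lists for \emph{all} $K$-neighbors, including those in $H$, whereas the paper takes $G' = G - (V(K)\setminus V(H))$ and only reduces by the colors of the neighbors outside $H$. Keeping $H$ in $G'$ is precisely what makes the list counting close: for an internal vertex $x$ one never pays for an $H$-neighbor, so by Lemma~\ref{lemma:few} at most two consecutive vertices of $K - V(H)$ contribute, giving $|L'_1(x)|\ge 6$ and $|L'_2(x)|\ge 7$, and since every vertex with a $K - V(H)$-neighbor lands on the outer face of $G'$, condition (Gb) is automatically vacuous for them. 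Your accounting, by contrast, can genuinely fail and the proposed repair does not rescue it: consider $x$ internal and adjacent to $h_1$ and to the $K$-vertex $w$ next to $h_1$ (so $xh_1w$ is a triangle bounding a face, which is a perfectly legal configuration in a minimal counterexample). Then you remove $\{\delta_1(h_1),\delta_2(h_1)\}$ and $\{\psi_1(w)\}$ from $L_2(x)$, and by the construction in Lemma~\ref{lemma:stable} the color $\psi_1(w)$ was explicitly chosen to avoid $\delta_1(h_1),\delta_2(h_1)$, so all three removed colors are distinct, yielding $|L'_2(x)|\ge 9-3=6<7$. There is nothing here to split: $h_1xw$ is a triangle, not an induced separating path, and the triangle bounds a face, so the ``two smaller planar pieces'' you invoke do not exist.

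There is also a structural issue with the recombination. Lemma~\ref{lem:total} requires the two subgraphs to cover $G$ and overlap, but in your setup $K$ and $G' = G - V(K)$ are vertex-disjoint, and $K\cup G'$ misses every $K$-to-$G'$ edge, so the lemma cannot be applied to paste $\vec{\psi}$ and the $G'$-trajectory together. (The paper does not use Lemma~\ref{lem:total} at this stage either; it builds the witness $\sigma_i = \omega_i^{G'-V(H)}+\pi_i$ by hand and argues properness across the $K$-to-$G'$ edges directly from the reduced lists.) Relatedly, ``reading off $\vec{\delta}\,'$ from $\vec{\psi}$'' for a new path $H'$ in $G'$ is not meaningful, since $\vec{\psi}$ is defined on $K$, which is disjoint from $G'$. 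The cleaner and correct move is the paper's: retain $H\subseteq G'$, keep the same $\vec{\delta}$, and let the shared restriction to $H$ be what makes the two trajectories compatible.
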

\begin{proof}
Suppose for a contradiction this is not the case.  Then there exists a counterexample, and thus
also a minimal one, say $S=(G,H,L_1,L_2,\varphi_0,\vec{\delta})$.
Let $K$ be the induced cycle bounding the outer face of $G$.
By Lemma~\ref{lemma:stable}, $\vec{\delta}$ lifts to an $(L_1,L_2)$-trajectory $\vec{\psi}$ in $K$ starting with
$\varphi_0\restriction V(K)$ and stable outside of $H$; let $(\pi_1,\pi_2)$ be the corresponding witness,
where $\pi_2=\pi_2^H$.

Let $G'=G-(V(K)\setminus V(H))$.  Let $L'_1$ and $L'_2$ be list assignments for $G'$ matching $L_1$ and $L_2$ on $H$ and such that
for each vertex $u\in V(G)\setminus V(K)$,
$$L'_1(u)=L_1(u)\setminus \bigcup_{v\in N(u)\cap V(K)\setminus V(H)} \{\varphi_0(v),\psi_1(v)\}$$
and
$$L'_2(u)=L_2(u)\setminus \bigcup_{v\in N(u)\cap V(K)\setminus V(H)} \{\psi_1(v)\}.$$
By Lemma~\ref{lemma:few}, $u$ has at most two consecutive neighbors in $K-V(H)$, and thus
$|L'_1(u)|\ge 6$ and $|L'_2(u)|\ge 7$.  Moreover, $u$ is not adjacent to all vertices of $H$ by the same lemma,
implying (Gc) holds.  Therefore, $S'=(G',H,L'_1,L'_2,\varphi_0\restriction V(G'),\vec{\delta})$ is a valid scene.
By the minimality of $G$, $S'$ has a trajectory $\vec{\theta}$, witnessed by $(\omega_1,\omega_2)$.

For $i\in \{1,2\}$, let $\sigma_i=\omega_i^{G'-V(H)}+\pi_i$.  Note that the choice of the list assignment $L'_1$ implies
the recoloring according to $\omega_1^{G'-V(H)}$ does not give any vertex $u\in V(G)\setminus V(K)$ the color
$\varphi_0(v)$ of any neighbor $v\in V(K)$, and thus the sequence of recolorings $\omega_1^{G'-V(H)}$ is proper.
Moreover, for any such $u$ and $v$ the choice of $L'_1$ implies $\theta_1(u)\neq \psi_1(v)$, and thus the sequence of recolorings $\pi_1$ 
is proper.  The choice of $L'_2$ similarly shows that the recoloring according to the sequence $\omega_2^{G'-V(H)}$ does not conflict
with $\psi_1$.  The sequence of recolorings $\pi_2$ only affects the vertices of $H$ and does not conflict with $\theta_2$, since $\omega_2$
performs a once-only sequence of recolorings with the same result.
Therefore, $(\sigma_1,\sigma_2)$ is a witness of a trajectory of $S$, which is a contradiction.
\end{proof}

\section{Triangle-free graphs}

Next, let us give the argument for triangle-free graphs.
We say that a scene $S=(G,H,L_1,L_2,\varphi_0,\vec{\delta})$ is \emph{T-valid} if $G$
is triangle-free, $|V(H)|\le 5$,
\begin{itemize}
\item[(Ta)] for each $v\in V(G)$, $|L_1(v)\setminus L_2(v)|\le 1$ and $|L_1(v)|,|L_2(v)|\ge 5$,
\item[(Tb)] if $v\in V(G)$ is an internal vertex, then $|L_1(v)|\ge 7$ and $|L_2(v)|\ge 6$.
Moreover, if $H$ is a path with at least four vertices, then at least $|V(H)|-3$ ends $x$ of $H$
have the following property: All neighbors $v\in V(G)\setminus V(H)$ of $x$ satisfy $|L_1(v)|\ge 7$ and $|L_2(v)|\ge 6$.  And,
\item[(Tc)] if $H$ is a path with five vertices, then each vertex in $V(G)\setminus V(H)$ has
at most one neighbor in $H$.
\end{itemize}
Again, we aim to prove by contradiction that every T-valid scene has a trajectory.
We overload the terminology; in this section, a \emph{counterexample} is a T-valid scene without a trajectory.
The argument from the proof of Lemma~\ref{lemma:basprop} gives the following.
\begin{lemma}\label{lemma:basprop-g5}
Suppose $S=(G,H,L_1,L_2,\varphi_0,\vec{\delta})$ is a minimal counterexample.
Then $G$ is $2$-connected, $H$ is a path with at least three vertices, every $(\le\!5)$-cycle
in $G$ bounds a face, and every chord of the outer face of $G$ is incident with one of the non-end vertices of $H$.
Moreover, every vertex in $V(G)\setminus V(H)$ has degree at least three.
\end{lemma}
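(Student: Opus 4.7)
The plan is to mimic the proof of Lemma~\ref{lemma:basprop} step by step in the T-valid setting, and then add an argument for the new degree claim. The unifying skeleton of the ``split'' arguments is as before: given a forbidden structure, write $G = G_1 \cup G_2$ with $H \subseteq G_1$ (after possibly swapping), apply minimality to $S_1 = (G_1, H, L_1, L_2, \varphi_0\restriction V(G_1), \vec\delta)$ to obtain a trajectory $\vec\psi$, then apply minimality to $S_2 = (G_2, H_2, L_1, L_2, \varphi_0\restriction V(G_2), \vec\psi\restriction H_2)$ with $H_2 = G_1 \cap G_2$, and combine the two via Lemma~\ref{lem:total}. The arguments for $2$-connectedness and for the chord condition are then essentially verbatim from Lemma~\ref{lemma:basprop}, the only wrinkle being that $H$ can now be a path with up to five vertices, so that when $H$ spans a cut vertex $v$ each sub-path $H_i$ has at most four vertices; this forces (Tc) to be vacuous for $S_i$, and the end-clause of~(Tb) is witnessed in each $H_i$ by the end of $H$ it inherits (which satisfied the same clause in $S$).

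For the claim that every $(\le\!5)$-cycle bounds a face, a short case analysis using triangle-freeness shows that every $4$- and $5$-cycle of $G$ is induced (any chord would produce a triangle). If a $(\le\!5)$-cycle $T$ does not bound a face, take $G_2$ to be the subgraph drawn inside $T$, $G_1 = G - (V(G_2) \setminus V(T))$, and $H_2 = T$. Since $H_2$ is a cycle and not a path, the path-specific second clause of~(Tb) and the condition~(Tc) are vacuous for $S_2$, while the vertices of $V(G_2) \setminus V(T)$ are internal in $G$ and so inherit their list-size bounds; apply the split-and-combine pattern.

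For the claim that $H$ is a path with at least three vertices, distinguish two cases. If $H$ is a cycle then $H = K$ with $|V(H)| \in \{4,5\}$ (triangle-freeness rules out length $3$); delete an edge $e$ of $H$, apply minimality to $(G-e, H-e, L_1, L_2, \varphi_0, \vec\delta)$, and use Observation~\ref{obs:reorder} to reorder the recolorings on $H-e$ to match a witness of $\vec\delta$ on $H$, yielding a trajectory of $S$. If $H$ is a path with $|V(H)| \le 2$, extend $H$ by a neighbouring outer-face vertex $v$ to get $H'$ and extend $\vec\delta$ to $\vec{\delta}'$ by choosing $\delta'_i(v) \in L_i(v)$ avoiding the at most four colors $\{\delta_{i-1}(u), \delta_i(u) : u \in N(v) \cap V(H)\}$; since $|L_i(v)| \ge 5$, such a choice exists. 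Because $|V(H')| \le 3$, the enlarged scene is T-valid, contradicting the maximality of $|V(H)|$ in the choice of $S$.

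Finally, for the degree claim, suppose some $v \in V(G) \setminus V(H)$ has $\deg(v) \le 2$ and consider $S' = (G - v, H, L_1, L_2, \varphi_0\restriction V(G-v), \vec\delta)$. Triangle-freeness and $|V(H)| \le 5$ are preserved, and since removing a vertex only moves other vertices outward in the face hierarchy, every internal vertex of $G - v$ is internal in $G$; hence all of (Ta)--(Tc) are inherited. By minimality, $S'$ has a trajectory $\vec\psi$ with witness $(\omega_1, \omega_2)$. Setting $\psi_0 = \varphi_0$, for $i = 1, 2$ choose $c_i \in L_i(v) \setminus \{\psi_{i-1}(u), \psi_i(u) : u \in N(v)\}$; this is possible since the forbidden set has size at most four while $|L_i(v)| \ge 5$. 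Insert $(v, c_i)$ at the beginning of $\omega_i$: the result is a once-only $H$-late proper sequence, because $c_i$ differs from both the pre- and post-recoloring color of each neighbour of $v$ throughout phase $i$. The main obstacle throughout is the careful verification of T-validity for each derived scene, particularly the end-clause of~(Tb) and the condition~(Tc); the unifying observation that makes it tractable is that these path-specific clauses are triggered only when $H$ is a long path and are thus vacuous in every derived scene where $H_2$ either has fewer than four vertices or is not a path at all.
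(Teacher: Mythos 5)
Your proposal follows the same overall skeleton as the paper's proof, and most of your checks are sound. However, there is a genuine gap in the case when $H$ is a cycle, and it is precisely the point the paper flags as needing care.

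When $H$ is a cycle (so $H=K$ and, by triangle-freeness, $|V(H)|\in\{4,5\}$), you propose to delete an edge $e$ of $H$ and apply minimality to the scene $(G-e,H-e,L_1,L_2,\varphi_0,\vec{\delta})$. But you never verify that this scene is T-valid, and the verification is not automatic when $|V(H)|=5$: then $H-e$ is a $5$-vertex path, so condition (Tc) requires that every vertex in $V(G-e)\setminus V(H-e)=V(G)\setminus V(H)$ has at most one neighbor in $H$. This is false in general; it only holds after observing that a vertex $v$ with two neighbors in a $5$-cycle $H$ bounding the outer face would, by the fact that all $(\le 5)$-cycles bound faces, force $V(G)=V(H)\cup\{v\}$ with $\deg v=2$, contradicting the minimum degree three claim. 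The difficulty is that you prove the degree claim \emph{after} the cycle case, so you cannot invoke it there. The paper's proof places the degree argument before the cycle case for exactly this reason, and you should reorder your steps accordingly (the degree argument you give is self-contained and does not rely on $H$ being a path, so this reordering is harmless). Apart from this ordering/verification issue, the rest of your proof — the connectivity and chord arguments, the exclusion of non-facial short cycles, the extension when $|V(H)|\le 2$, and the degree argument — matches the paper's reasoning and is correct.
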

\begin{proof}
The claims on connectivity and chords are proved exactly as in Lemma~\ref{lemma:basprop}.
Non-facial $(\le\!5)$-cycles are excluded in the same way as non-facial triangles in Lemma~\ref{lemma:basprop}.

Suppose now that $v\in V(G)\setminus V(H)$ has degree at most two.  By the minimality of $S$,
the scene $(G-v,H,L_1,L_2,\varphi_0\restriction V(G-v),\vec{\delta})$ has a trajectory $\vec{\psi}$ witnessed by $(\sigma_1,\sigma_2)$.
For $i\in\{1,2\}$, choose a color $c_i\in L_i(v)\setminus \bigcup_{uv\in E(G)} \{\psi_{i-1}(u),\psi_i(u)\}$,
and let $\sigma'_i=(v,c_i)+\sigma_i$.  Then $(\sigma'_1,\sigma'_2)$ witnesses a trajectory of $S$, which is a contradiction.
Therefore, every vertex in $V(G)\setminus V(H)$ has degree at least three.

If $|V(H)|\le 2$, we can extend $H$ to a $3$-vertex path by adding vertices and straightforwardly lifting $\vec{\delta}$.
Suppose now that $H$ is a cycle.  As in Lemma~\ref{lemma:basprop}, this is dealt with by deleting an edge of the cycle,
but a little care is needed to ensure (Tc) holds.  Specifically, we note that if a vertex $v$ had two neighbors in $H$,
then since all $(\le\!5)$-cycles bound faces, we would have $V(G)=V(H)\cup\{v\}$ and $\deg v=2$, contradicting the previous
paragraph.
\end{proof}

Similarly to Lemma~\ref{lemma:indu}, we can eliminate the remaining chords.
\begin{lemma}\label{lemma:indu-g5}
Suppose $S=(G,H,L_1,L_2,\varphi_0,\vec{\delta})$ is a minimal counterexample.
Then the outer face of $G$ is bounded by an induced cycle.
\end{lemma}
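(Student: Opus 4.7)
The plan is to mirror the proof of Lemma~\ref{lemma:indu} in the triangle-free setting. Suppose for contradiction that the cycle $K$ bounding the outer face of $G$ has a chord. By Lemma~\ref{lemma:basprop-g5}, every chord of $K$ takes the form $wv$ with $w$ a non-end vertex of the path $H$ and $v\in V(K)\setminus V(H)$. I would fix such a chord $wv$, preferring $w$ to be as close to the middle of $H$ as possible (in particular, $w=h_3$ when $|V(H)|=5$ and such a chord exists), and write $G=G_1\cup G_2$ with $V(G_1)\cap V(G_2)=\{v,w\}$ by splitting along $wv$, so that the two sides of $K$ are distributed between $G_1$ and $G_2$. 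Let $H'=G[V(H)\cup\{v\}]$ and, for $j\in\{1,2\}$, let $H_j=H'\cap G_j$.

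Next I would extend $\vec{\delta}$ to an $(L_1,L_2)$-trajectory $\vec{\psi}$ in $H'$ by setting $\psi_0(v)=\varphi_0(v)$ and, for $i\in\{1,2\}$, choosing
\[
\psi_i(v)\in L_i(v)\setminus\bigcup_{u\in N(v)\cap V(H)}\{\delta_{i-1}(u),\delta_i(u)\}.
\]
A short case check shows $|N(v)\cap V(H)|\le 2$: triangle-freeness forbids $v$ from being adjacent to a neighbor of $w$ in $H$; Lemma~\ref{lemma:basprop-g5} forbids any chord from $v$ to an end of $H$ (since the $H$-endpoint would then have to be a non-end); and (Tc) bounds this intersection by $1$ when $|V(H)|=5$. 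Hence at most four colors are forbidden, and $|L_i(v)|\ge 5$ from (Ta) guarantees a valid choice. Recoloring $v$ before the $H$-recoloring in each phase witnesses that $\vec{\delta}$ lifts to $\vec{\psi}$ in $H'$.

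Form the sub-scenes $S_j=(G_j,H_j,L_1,L_2,\varphi_0\restriction V(G_j),\vec{\psi}\restriction H_j)$. Condition (Ta) and the internal-vertex clause of (Tb) are inherited from $S$, and the remaining clauses of (Tb) and (Tc) I would verify by case analysis on $|V(H)|$ and the position of $w$: in the balanced cases ($|V(H)|\le 4$, or $|V(H)|=5$ with $w=h_3$), each $H_j$ has at most four vertices, so (Tc) is vacuous and (Tb)'s end-clause either transfers from an $H$-end that is also an end of $H_j$, or follows from the observation that the new end $v$ of $H_j$ has its external $G_j$-neighbors internal in $G$. Once T-validity is established, the minimality of $S$ yields trajectories of each $S_j$; concatenating their witnesses with the witness of $\vec{\delta}$ lifting to $\vec{\psi}$ in $H'$ (exactly as in the combination step in the proof of Lemma~\ref{lemma:indu}) produces a trajectory of $S$, contradicting the assumption.

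The principal obstacle is the unbalanced sub-case $|V(H)|=5$ with only chords at $w\in\{h_2,h_4\}$, in which one of the $H_j$'s is a $5$-vertex path and I must reverify (Tc) for $S_j$: that no $x\in V(G_j)\setminus V(H_j)$ has two neighbors in $V(H_j)$. Triangle-freeness forbids $x$ adjacent to both $v$ and $w$, and (Tc) on $S$ forbids $x$ adjacent to two vertices of $H$; the residual danger is $x$ adjacent to $v$ and to some $h_i\in V(H_j)\setminus\{w\}$, which would force an additional chord $xh_i$. I would rule this out by choosing the chord $wv$ so that its interior region (the side containing $H_j$) contains no further chord, ensuring no such $x$ exists; alternatively, by showing directly that such $x$ produces a short non-facial cycle prohibited by Lemma~\ref{lemma:basprop-g5}.
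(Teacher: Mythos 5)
Your approach imports the symmetric split of Lemma~\ref{lemma:indu} into the triangle-free setting, but the obstacle you flag at the end is a genuine gap and neither of your two suggested patches removes it. Concretely, take $|V(H)|=5$ and suppose the only chord is $h_2v$, so you are forced into $w=h_2$. Then $H_2 = H'\cap G_2$ is the $5$-vertex path $vh_2h_3h_4h_5$, and (Tc) must hold for $S_2$. But Lemma~\ref{lemma:basprop-g5} does not exclude a vertex $x\in V(G_2)\setminus V(H_2)$ adjacent to both $v$ and $h_3$: the $4$-cycle $xvh_2h_3$ is merely required to bound a face, which is allowed. Your first fix (pick the chord so that the region containing $H_2$ has no further chord of $K$) does not touch $x$, since $xv$ and $xh_3$ need not be chords of $K$ at all: $x$ may be internal, or $xv$ may be an edge of $K$. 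Your second fix fails because $xvh_2h_3$ has length $4$ but is facial, hence not forbidden.

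The paper avoids this entirely by abandoning the symmetry with Lemma~\ref{lemma:indu}. It applies minimality first to $G_2$ with scene subgraph $H\cap G_2 = h_t\ldots h_m$, which has $m-t+1\le 4$ vertices since $t\ge 2$; crucially, $v$ is \emph{not} in this scene subgraph, so no trajectory for $v$ is prescribed in advance. The resulting trajectory of $S_2$ then delivers a trajectory for $v$ for free; this is combined with $\vec\delta$ restricted to $h_1\ldots h_t$ to produce a trajectory $\vec\gamma$ on $Q=h_1\ldots h_tv$, and minimality is applied to $G_1$ with scene subgraph $Q$, which has $t+1\le\lceil m/2\rceil+1\le 4$ vertices. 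Since both $H$-parts have at most four vertices, (Tc) is vacuous in both sub-scenes, and the troublesome $x$ never has to be controlled. (The paper also verifies $vh_1\notin E(G)$, by triangle-freeness when $t=2$ and by (Tc) on $S$ when $t=3$, so that $Q$ is a genuine path; your construction silently assumes $H'$ and its restrictions are paths, which requires the analogous check.) If you want to salvage your version, the fix is precisely the paper's asymmetry: leave $v$ out of the $G_2$-scene and let the induction supply its colors, rather than choosing $\psi_i(v)$ in advance.
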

\begin{proof}
Suppose for contradiction this is not the case.  By Lemma~\ref{lemma:basprop-g5}, this implies
we can label the vertices of the path $H$ as $H=h_1\ldots h_m$ (for some $m\in\{3,4,5\}$) 
and the outer face of $G$ has a chord $h_tv$ for some $t\in\{2,\ldots,\lceil m/2\rceil\}$.
Let $G=G_1\cup G_2$, where $G_1$ and $G_2$ are proper induced subgraphs of $G$
intersecting in $h_2v$ and with $h_1\in V(G_1)$.  If $t=3$ (and so $m=5$), then $v$ has only one neighbor in $H$
by (Tc), and thus $vh_1\not\in E(G)$.  If $t=2$, this is also true, since $G$ is triangle-free.
Let $(\beta_1,\beta_2)$ be a witness of the trajectory $\vec{\delta}$.

By the minimality of $S$, the scene $S_2=(G_2,H\cap G_2,L_1,L_2,\varphi_0\restriction V(G_2),\vec{\delta}\restriction (H\cap G_2))$
has a trajectory $\vec{\psi}$, witnessed by $(\omega_1,\omega_2)$.
Let $Q=h_1\ldots h_tv$ and let $\vec{\gamma}$ be the trajectory matching $\vec{\delta}$ on $Q-v$ and
$\vec{\psi}$ on $v$.  Note that $\vec{\gamma}$ is indeed a trajectory on $Q$, as the vertex $v$ can be recolored
just before $h_t$, matching the order in the $(H\cap G_2)$-late recoloring sequences of $(\omega_1,\omega_2)$.
The scene $S_1=(G_1,Q,L_1,L_2,\varphi_0\restriction V(G_1),\vec{\gamma})$
is T-valid, and by the minimality of $S$, it has a trajectory $\vec{\theta}$ witnessed by $(\pi_1,\pi_2)$.
For $i\in\{1,2\}$, let $\sigma_i=\pi_i^{G_1-V(Q)}+\omega_i^{G_2-V(H)}+\beta_i$.
Then $(\sigma_1,\sigma_2)$ is a witness of a trajectory of $S$, which is a contradiction.
\end{proof}

The proof of the following claim is analogous to the proof of Lemma~\ref{lemma:few} (but simpler, since we do not
need to worry about the condition (Gc)).  Note also that unlike Lemma~\ref{lemma:few}, $v$ cannot have two adjacent
neighbors, since $G$ is triangle-free.
\begin{lemma}\label{lemma:few-g5}
Suppose $S=(G,H,L_1,L_2,\varphi_0,\vec{\delta})$ is a minimal counterexample, $H=h_1\ldots h_m$, $K$ is the cycle
bounding the outer face of $G$, and $v$ is an internal vertex of $G$.  Then $v$ has at most one 
neighbor in $K-\{h_2,\ldots, h_{m-1}\}$.
\end{lemma}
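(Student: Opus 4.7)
The plan is to mimic the proof of Lemma~\ref{lemma:few}, taking advantage of two simplifications afforded by the triangle-free setting: there is no analog of condition (Gc) to worry about, and triangle-freeness automatically prevents an internal vertex from having two adjacent neighbors in $K$. Suppose for contradiction that $v$ has two distinct neighbors $v_1, v_2 \in V(K)\setminus\{h_2,\ldots,h_{m-1}\}$. Since $G$ is triangle-free, $v_1v_2\notin E(G)$, so $P=v_1vv_2$ is an induced path. Because both $v_1$ and $v_2$ lie in $V(K)\setminus\{h_2,\ldots,h_{m-1}\}$, the path $P$ cuts the closed disk bounded by $K$ into two regions with $H$ lying entirely in one of them; write $G=G_1\cup G_2$ for proper induced subgraphs intersecting exactly in $P$ and with $H\subseteq G_1$.

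The argument then proceeds in three steps. First, apply the minimality of $S$ to the scene $S_1=(G_1,H,L_1,L_2,\varphi_0\restriction V(G_1),\vec{\delta})$ to obtain a trajectory $\vec{\psi}$ of $S_1$. Second, use $\vec{\psi}\restriction P$ as the prescribed trajectory for the scene $S_2=(G_2,P,L_1,L_2,\varphi_0\restriction V(G_2),\vec{\psi}\restriction P)$, and again apply minimality to obtain a trajectory of $S_2$. Third, combine the two via Lemma~\ref{lem:total}, with the role of ``$H$'' there played by the original $H$ of $S$, to produce a trajectory of $S$, contradicting the assumption that $S$ is a counterexample.

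The main obstacle is verifying that both $S_1$ and $S_2$ are T-valid. For $S_1$: the path $H$ is unchanged, the lists are inherited, and every internal vertex of $G_1$ was already internal in $G$, so (Ta) and the first clause of (Tb) hold immediately. The ``ends'' clause of (Tb) and the condition (Tc) are inherited because the relevant sets of neighbors in $V(G_1)\setminus V(H)$ are subsets of those in $V(G)\setminus V(H)$. For $S_2$: the distinguished path $P$ has only three vertices, so the second clause of (Tb) and all of (Tc) are vacuous, while the first clause of (Tb) holds because internal vertices of $G_2$ are internal in $G$ and retain their list sizes. Crucially, no list modification of $v$ is needed, in contrast with the proof of Lemma~\ref{lemma:few}: in the triangle-free setting no vertex $z$ can simultaneously be a common neighbor of $v$, $v_1$, and $v_2$, since such a $z$ would form a triangle $zvv_1$.
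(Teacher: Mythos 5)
Your proposal is correct and matches the paper's intended argument: the paper does not spell out a proof of this lemma but simply remarks that it is analogous to Lemma~\ref{lemma:few}, only simpler because there is no condition (Gc) to protect (hence no list modification of $v$ is needed) and because triangle-freeness rules out two adjacent neighbors of $v$ on $K$. You identified both simplifications, carried out the same split $G=G_1\cup G_2$ along the induced path $v_1vv_2$, checked T-validity of $S_1$ and $S_2$, and combined via Lemma~\ref{lem:total} exactly as intended.
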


Next, we exclude the possibility that $G$ has a short outer face.
\begin{lemma}\label{lemma:long-g5}
If $S=(G,H,L_1,L_2,\varphi_0,\vec{\delta})$ is a minimal counterexample, then the cycle $K$ bounding the outer
face of $G$ has length at least six.
\end{lemma}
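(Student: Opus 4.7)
The plan is to argue by contradiction. Suppose $|K|\le 5$. Since $K$ is induced (Lemma~\ref{lemma:indu-g5}) and $G$ is triangle-free, $|K|\ge 4$; since $V(H)$ is a consecutive segment of $V(K)$ inducing a path in $G$ while the induced subgraph on $V(K)$ is $K$ itself, we have $|V(H)|\le|K|-1$. Combined with $|V(H)|\ge 3$ from Lemma~\ref{lemma:basprop-g5}, the remaining configurations are $(|K|,|V(H)|)\in\{(4,3),(5,3),(5,4)\}$, which must be disposed of individually.

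For each configuration, I would mimic the proof of Lemma~\ref{lemma:long}. Writing $W=V(K)\setminus V(H)$ (one or two vertices, depending on the case), the strategy is to select, for each $v\in W$, two target colors $c_1(v),c_2(v)$ meant to serve as $\psi_1(v),\psi_2(v)$; each must avoid the colors taken by $v$'s $H$-neighbors under $\vec{\delta}$. After reducing the lists of internal neighbors of $W$ to forbid these colors (removing $\{\varphi_0(v),c_1(v),c_2(v)\}$ from $L_1$ and $\{c_1(v),c_2(v)\}$ from $L_2$), the minimality of $S$ can be invoked on the scene $(G-W,H,L'_1,L'_2,\varphi_0\!\restriction\!V(G-W),\vec{\delta})$, and the returned trajectory is lifted to $G$ by inserting the recolorings of $W$ between the $V(G)\setminus V(H)\setminus W$ block and the $H$ block of each phase.

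The main obstacle is preserving T-validity in the reduced scene: for an internal-in-$G$ vertex $u$ that becomes incident with the outer face of $G-W$ because $u$ is adjacent to some $v\in W$, T-validity requires $|L'_1(u)|\ge 5$ and $|L'_2(u)|\ge 5$. From (Tb) we have $|L_1(u)|\ge 7$ and $|L_2(u)|\ge 6$, so the naive removal of three colors from $L_1(u)$ leaves only $4$, one color short of what is needed. The central trick is to pick $c_1(v)=c_2(v)$, so $v$ is recolored only in phase~1 and untouched in phase~2; then the removals shrink to $\{\varphi_0(v),c_1(v)\}$ from $L_1(u)$ and $\{c_1(v)\}$ from $L_2(u)$, restoring $|L'_1(u)|,|L'_2(u)|\ge 5$. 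For this to be feasible I need $c_1(v)\in L_1(v)\cap L_2(v)$ avoiding the up to six forbidden colors $\{\delta_i(h):i\in\{0,1,2\},\ h\in N(v)\cap V(H)\}$; since $|L_1(v)\cap L_2(v)|\ge 4$, this is tight and may fail in degenerate list configurations. When it does, I would use an auxiliary strategy, such as setting $c_1(v)=\varphi_0(v)$ (no recoloring of $v$ in phase~1, valid when $\varphi_0(v)\in L_1(v)$ is compatible with $\delta_1$ on $v$'s $H$-neighbors) and recoloring $v$ to a suitable $c_2(v)$ at the start of phase~2.

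The cases $(5,3)$ and $(5,4)$ admit more flexibility because the two vertices of $W$ form a path in $K$ and additional room is granted to the ends of $H$ by (Tb); they can be handled jointly via a Lemma~\ref{lemma:stable}-style stable-outside-of-$H$ trajectory on $K$. The hardest case is $(|K|,|V(H)|)=(4,3)$, where $K=h_1h_2h_3v$, $W=\{v\}$, and flexibility is minimal. If both the $c_1(v)=c_2(v)$ and the $c_1(v)=\varphi_0(v)$ strategies fail for this $v$, the list configuration at $v$ becomes extremely rigid; I would then close the proof by combining this rigidity with Lemma~\ref{lemma:few-g5} (which forces every internal neighbor of $v$ to have at most one neighbor in $K-h_2$) and with the degree condition $\deg v\ge 3$ from Lemma~\ref{lemma:basprop-g5}, thereby reducing to a tiny residual subgraph around $v$ in which the trajectory can be built by hand, contradicting minimality.
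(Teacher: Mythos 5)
Your proposal misses the decisive simplification available in the triangle-free setting and, as you yourself acknowledge at several points, the argument it tries instead does not close. The paper's proof is short: since $|V(K)|\le 5$, the cycle $K$ itself can serve as the marked subgraph $H$ in a new scene $(G,K,L_1,L_2,\varphi_0,\vec{\psi})$ — T-validity allows $|V(H)|\le 5$, and the extra clauses of (Tb) and (Tc) are phrased only for the case that $H$ is a \emph{path}, so they are vacuous when $H=K$ is a cycle. One checks directly that $\vec{\delta}$ lifts to a trajectory $\vec{\psi}$ on $K$ (at most two vertices to extend, each with $|L_i|\ge 5$, so avoiding the $\le 4$ relevant colors of the $\le 2$ cycle-neighbors is easy), and then minimality (same $|V(G)|+|E(G)|$, strictly larger $|V(H)|$) gives a trajectory for the new scene, which is also a trajectory for $S$. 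This is precisely what the cap $|V(H)|\le 5$ was chosen to enable; the analogous argument is unavailable in Lemma~\ref{lemma:long}, where $|V(H)|\le 3$, which is why that lemma needs the list-reduction machinery you are emulating.

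Your attempted mimicry of Lemma~\ref{lemma:long} has concrete gaps that you flag but do not fill. First, the $c_1(v)=c_2(v)$ device asks for a color in $L_1(v)\cap L_2(v)$ (size $\ge 4$) that avoids up to six colors coming from $\{\delta_i(h): i\in\{0,1,2\},\ h\in N(v)\cap V(H)\}$; this can fail. Second, the fallback $c_1(v)=\varphi_0(v)$ is not even well-posed: $\varphi_0$ is merely a proper coloring and need not satisfy $\varphi_0(v)\in L_1(v)$, so ``no recoloring in phase~1'' is not licensed by T-validity of the reduced scene. Third, the final paragraph — ``reduce to a tiny residual subgraph around $v$ in which the trajectory can be built by hand'' — is a placeholder, not an argument, and the $(4,3)$ case you single out as hardest is exactly where your strategy is least controlled. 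None of these issues arise once one observes that $K$ can be promoted to the role of $H$.
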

\begin{proof}
Suppose for a contradiction $|V(K)|\le 5$.  It is easy to see that $\vec{\delta}$ lifts to an $(L_1, L_2)$-trajectory $\vec{\psi}$ in $K$
starting with $\varphi_0\restriction V(K)$.   By the minimality of $S$, the scene
$(G,K,L_1,L_2,\varphi_0,\vec{\psi})$ has a trajectory. This trajectory is also a trajectory of $S$, a contradiction.
\end{proof}

As a final preparatory step, let us show the existence of a stable trajectory; this is substantially more involved than in the proof of Lemma~\ref{lemma:stable}.
\begin{lemma}\label{lemma:stable-g5}
Let $S=(G,H,L_1,L_2,\varphi_0,\vec{\delta})$ be a minimal counterexample and let $K$ be the cycle bounding the outer
face of $G$.  Then $\vec{\delta}$ lifts to an $(L_1,L_2)$-trajectory in $K$ starting with $\varphi_0\restriction V(K)$ and stable outside of $H$.
\end{lemma}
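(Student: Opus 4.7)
Label the outer cycle as $K = h_1 h_2 \ldots h_m v_1 v_2 \ldots v_\ell$ with $H = h_1 \ldots h_m$ (so $m \in \{3,4,5\}$), numbered so that $v_\ell$ is adjacent to $h_1$. By Lemma~\ref{lemma:long-g5}, $|V(K)| \ge 6$, and condition (Tc) rules out $v_1$ being adjacent to both $h_1$ and $h_m$ when $m = 5$; together these give $\ell \ge 2$, and $\ell \ge 3$ when $m = 3$. For each $v_i$, let $L(v_i) = L_1(v_i) \cap L_2(v_i)$; by (Ta), $|L(v_i)| \ge 4$. Following the strategy of Lemma~\ref{lemma:stable}, the plan is to pick $\psi_1(v_i) \in L(v_i)$, extend $\psi_1 = \delta_1$ on $V(H)$, and set $\psi_2 = \psi_1$ outside of $V(H)$. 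Concatenating a sequential recoloring of the $v_i$'s with an $H$-witness $(\beta_1,\beta_2)$ of $\vec\delta$ then yields the desired stable trajectory, provided the recolorings of the $v_i$'s form a proper sequence, $\psi_1(v_1) \notin \{\delta_0(h_m),\delta_1(h_m),\delta_2(h_m)\}$, and $\psi_1(v_\ell) \notin \{\delta_0(h_1),\delta_1(h_1),\delta_2(h_1)\}$.

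For a middle vertex $v_i$ with $2 \le i \le \ell-1$, only two colors are forbidden ($\psi_1(v_{i-1})$ already chosen and $\varphi_0(v_{i+1})$ not yet recolored), so a greedy choice from $L(v_i)$ works. The real difficulty is at the two endpoints $v_1, v_\ell$, each of which \emph{a priori} forbids up to four colors against $|L(v)| \ge 4$. I would split on $|V(H)|$: if $|V(H)| = 5$, property (Tb) yields $|L_1(v_1)|, |L_1(v_\ell)| \ge 7$, hence $|L(v_1)|, |L(v_\ell)| \ge 6$, and a greedy extension suffices. If $|V(H)| = 4$, (Tb) handles one endpoint, leaving the other tight; if $|V(H)| = 3$, both endpoints may be tight, but $\ell \ge 3$ guarantees the endpoints are distinct from their neighbors $v_2$ and $v_{\ell-1}$.

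For a tight endpoint, say $v_1$: if the naive greedy fails, then necessarily $L(v_1) = \{\delta_0(h_m),\delta_1(h_m),\delta_2(h_m),\varphi_0(v_2)\}$ with all four colors distinct. The rescue is to recolor $v_2$ before $v_1$ and exploit the choice of $\psi_1(v_2)$: either pick $\psi_1(v_2) \in L(v_2) \cap \{\delta_i(h_m) : i=0,1,2\}$ (shrinking $v_1$'s effective forbidden set to three colors) or pick $\psi_1(v_2) \in L(v_2) \setminus L(v_1)$ (eliminating $\psi_1(v_2)$ as a constraint at $v_1$), in each case also avoiding the current colors $\varphi_0(v_1)$ and $\varphi_0(v_3)$ of $v_2$'s neighbors. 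A short pigeonhole based on $|L(v_1) \cap L(v_2)| \le 3$ shows that one of the two options always survives. A symmetric argument handles $v_\ell$ via $\psi_1(v_{\ell-1})$.

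The main obstacle I anticipate is the case $|V(H)| = 3$ with $\ell = 3$, where $v_2 = v_{\ell-1}$ is simultaneously asked to rescue both $v_1$ and $v_3$ and the two sets of constraints on $\psi_1(v_2)$ may collide. Here I would carry out a finer joint case analysis on the patterns of $L(v_2)$ against the two triples $\{\delta_i(h_m)\}$ and $\{\delta_i(h_1)\}$; in any truly borderline configuration where no direct choice works, I would invoke the minimality of $S$ by reducing to a suitable smaller scene (e.g., deleting $v_2$ and shrinking the lists of its former neighbors by the handful of colors their removed neighbor can realize) and deriving a contradiction.
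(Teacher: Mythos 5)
Your setup is right and your easy cases go through, but you have not actually closed the hard case, and the fallbacks you sketch for it are unlikely to work. The paper's proof has a clean idea that your greedy framing misses: rather than assigning colors to $v_1,\dots,v_\ell$ in the same order you recolor them (which is what creates the ``four forbidden colors against a list of size four'' problem at each endpoint), the paper first fixes the new colors $c_1\in L'(v_1)$ and $c_\ell\in L'(v_\ell)$ of the two endpoints, where $L'(v_1)=L(v_1)\setminus\{\delta_0(h_m),\delta_1(h_m),\delta_2(h_m)\}$ and similarly for $v_\ell$. Only the three $\delta$-colors of the adjacent $H$-end are forbidden here, so $|L'(v_1)|,|L'(v_\ell)|\ge 1$ always. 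It then chooses the middle colors greedily while also avoiding the already-fixed $c_1$ and $c_\ell$, and, crucially, picks a \emph{recoloring order} in which each endpoint is recolored \emph{after} its middle-vertex neighbor (e.g.\ $v_2,v_3,\dots,v_\ell,v_1$ when $\ell\ge 4$). At the moment $v_1$ is recolored, its neighbor $v_2$ already holds $c_2$, which was chosen $\neq c_1$, and $h_m$ still holds $\delta_0(h_m)\notin L'(v_1)$, so the step is always legal; no pigeonhole is needed.

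Your $m=3$, $\ell=3$ ``joint rescue'' case is exactly where this matters, and the paper's resolution is short: if both $c_1\in L'(v_1)$ and $c_3\in L'(v_3)$ can be chosen $\neq\varphi_0(v_2)$, recolor $v_1,v_2,v_3$; otherwise both endpoints are tight and forced to $c_1=c_3=\varphi_0(v_2)$, in which case you recolor $v_2$ first to some $c_2\in L(v_2)\setminus\{c_1,\varphi_0(v_1),\varphi_0(v_3)\}$ (three colors forbidden, $|L(v_2)|\ge 4$), then $v_1$ and $v_3$, which now both avoid $c_2$ because $c_2\neq c_1=c_3$. The point you miss is that when both endpoints are tight they are forced to the \emph{same} color $\varphi_0(v_2)$, and since $v_1,v_3$ are non-adjacent this is harmless; there is no genuine collision to analyze by cases. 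Your second fallback --- deleting $v_2$ and invoking minimality of $S$ --- does not fit this lemma at all: the statement is purely about lifting $\vec\delta$ to the outer cycle $K$, and deleting a vertex of $K$ from $G$ destroys the cycle structure (and would need a re-verification of T-validity that you do not attempt), so this is not a viable escape. As written, the proposal has a genuine gap precisely at the case you flag as the main obstacle.
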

\begin{proof}
For each $v\in V(K)\setminus V(H)$, let $L(v)=L_1(v)\cap L_2(v)$.  Since $|L_1(v)\setminus L_2(v)|\le 1$,
we have $|L(v)|\ge |L_1(v)|-1$.
Let $H=h_1\ldots h_m$ and $K=h_1\ldots h_mv_1\ldots v_t$.
Let $(\beta_1,\beta_2)$ be the witness of the trajectory $\vec{\delta}$.
By Lemma~\ref{lemma:long-g5} and (Tc), we have $t\ge 2$.
Let $L'(v_1)=L(v_1)\setminus \{\delta_0(h_m),\delta_1(h_m),\delta_2(h_m)\}$ and
$L'(v_t)=L(v_t)\setminus \{\delta_0(h_1),\delta_1(h_1),\delta_2(h_1)\}$.

If $t=2$, then we have $m\ge 4$ by Lemma~\ref{lemma:long-g5}.
By (Tb) and symmetry, we can assume $|L(v_1)|\ge 6$, and thus $|L'(v_1)|\ge 3$.
Note that $|L(v_2)|\ge 4$ and $|L'(v_2)|\ge 1$.  Choose $c_2\in L'(v_2)$
and $c_1\in L'(v_1)\setminus \{\varphi_0(v_2),c_2\}$.
The desired trajectory is obtained by recoloring according to the sequence $(v_1,c_1),(v_2,c_2)+\beta_1$.
Hence, we can assume $t\ge 3$.  Note we have $|L'(v_1)|,|L'(v_t)|\ge 1$ and $|L(v_j)|\ge 4$ for $j\in\{2,\ldots,t-1\}$.
Choose $c_1\in L'(v_1)$ and $c_t\in L'(v_t)$ arbitrarily.

Suppose now $t=3$. If $c_1\neq\varphi_0(v_2)$, choose $c_2\in L(v_2)\setminus\{c_1,c_3,\varphi_0(v_3)\}$
and recolor according to the sequence $(v_1,c_1),(v_2,c_2),(v_3,c_3)+\beta_1$.
Hence, we can assume $c_1=\varphi_0(v_2)$, and by symmetry, $c_3=\varphi_0(v_2)$.
Choose $c_2\in L(v_2)\setminus\{c_1,\varphi_0(v_1),\varphi_0(v_3)\}$ and
recolor according to the sequence $(v_2,c_2),(v_1,c_1),(v_3,c_3)+\beta_1$.

Finally, suppose $t\ge 4$.  Choose $c_2\in L(v_2)\setminus\{c_1,\varphi_0(v_1),\varphi_0(v_3)\}$,
for $j=3,\ldots,t-2$, choose $c_j\in L(v_j)\setminus\{c_{j-1},\varphi_0(v_{j+1})\}$, and
choose $c_{t-1}\in L(v_{t-1})\setminus\{c_{t-2},c_t,\varphi_0(v_t)\}$ arbitrarily.
We perform the recoloring according to the sequence $(v_2,c_2),(v_3,c_3),\ldots,(v_t,c_t),(v_1,c_1)+\beta_1$.
\end{proof}

We are now ready to prove our main list-recoloring result for triangle-free graphs.
\begin{theorem}\label{thm:6colors}
Every T-valid scene has a trajectory.
\end{theorem}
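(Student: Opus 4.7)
The plan is to mirror the argument of Theorem~\ref{thm:9colors}. I will assume for contradiction that a minimal counterexample $S=(G,H,L_1,L_2,\varphi_0,\vec{\delta})$ exists. By Lemmas~\ref{lemma:basprop-g5}, \ref{lemma:indu-g5}, \ref{lemma:few-g5}, and \ref{lemma:long-g5}, I may assume $G$ is $2$-connected, the outer face is bounded by an induced cycle $K$ of length at least six, $H=h_1\ldots h_m$ is a path with $m\in\{3,4,5\}$, and every internal vertex of $G$ has at most one neighbor in $K-\{h_2,\ldots,h_{m-1}\}$. Apply Lemma~\ref{lemma:stable-g5} to obtain a stable $(L_1,L_2)$-trajectory $\vec{\psi}$ on $K$, with witness $(\pi_1,\pi_2)$ satisfying $\pi_2=\pi_2^H$.

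Next, pass to $G'=G-(V(K)\setminus V(H))$ and define list assignments $L'_1, L'_2$ on $G'$ agreeing with $L_1, L_2$ on $V(H)$; for each $u\in V(G)\setminus V(K)$, set $L'_1(u)=L_1(u)\setminus \bigcup_{v\in N(u)\cap (V(K)\setminus V(H))}\{\varphi_0(v),\psi_1(v)\}$ and $L'_2(u)=L_2(u)\setminus \bigcup_{v\in N(u)\cap (V(K)\setminus V(H))}\{\psi_1(v)\}$. I would then verify that $S'=(G',H,L'_1,L'_2,\varphi_0\restriction V(G'),\vec{\delta})$ is T-valid. Lemma~\ref{lemma:few-g5} gives $|N(u)\cap(V(K)\setminus V(H))|\le 1$, so at most two colors are removed from $L_1(u)$ and at most one from $L_2(u)$, yielding $|L'_1(u)|,|L'_2(u)|\ge 5$; combined with $|L'_1(u)\setminus L'_2(u)|\le|L_1(u)\setminus L_2(u)|\le 1$, this secures (Ta). Vertices that remain internal in $G'$ have no neighbor among the removed vertices, so their lists are unchanged and the main (Tb) bounds are inherited; (Tc) is immediate since $V(G')\setminus V(H)\subseteq V(G)\setminus V(H)$.

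The delicate point is the ``moreover'' clause of (Tb): when $m\in\{4,5\}$ one needs at least $m-3$ ends $x$ of $H$ all of whose neighbors in $V(G')\setminus V(H)$ still satisfy $|L'_1|\ge 7$ and $|L'_2|\ge 6$. Here Lemma~\ref{lemma:few-g5} is tailor-made: if $u\in V(G)\setminus V(K)$ is adjacent to an end $h_1$ or $h_m$ of $H$, then this single permitted neighbor in $K-\{h_2,\ldots,h_{m-1}\}$ is already accounted for, so $u$ has no neighbor in $V(K)\setminus V(H)$ at all and no list reduction is applied. The moreover clause therefore transfers unchanged from $S$ to $S'$, and $S'$ is T-valid.

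By minimality of $S$, the scene $S'$ has a trajectory $\vec{\theta}$ witnessed by $(\omega_1,\omega_2)$. Setting $\sigma_i=\omega_i^{G'-V(H)}+\pi_i$ as in the proof of Theorem~\ref{thm:9colors}, the removals built into $L'_1, L'_2$ guarantee that the internal recolorings do not collide with the boundary colorings $\varphi_0$ or $\psi_1$, while the stability of $\vec{\psi}$ outside $H$ ensures $\pi_2=\pi_2^H$ only affects $V(H)$ and can be synchronized with $\omega_2^H$ via Observation~\ref{obs:reorder}. Then $(\sigma_1,\sigma_2)$ witnesses a trajectory of $S$, a contradiction. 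The main obstacle I anticipate is the moreover-clause verification sketched above; once it is in place, the combining step is essentially identical to the general planar case.
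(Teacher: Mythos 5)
Your proposal is correct and follows essentially the same approach as the paper's proof: pass to $G'=G-(V(K)\setminus V(H))$ with the reduced lists obtained from the stable boundary trajectory of Lemma~\ref{lemma:stable-g5}, verify T-validity via Lemma~\ref{lemma:few-g5}, apply minimality, and recombine. Your explicit treatment of the ``moreover'' clause of (Tb) — observing that any internal vertex adjacent to an end of $H$ has, by Lemma~\ref{lemma:few-g5}, no neighbor in $V(K)\setminus V(H)$ and hence an unreduced list — is precisely the check the paper performs (stated there in contrapositive form), and the remaining verifications and the splicing of $(\omega_i^{G'-V(H)},\pi_i)$ match the paper as well.
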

\begin{proof}
Suppose for a contradiction this is not the case, and thus there exists a minimal counterexample $S=(G,H,L_1,L_2,\varphi_0,\vec{\delta})$.
Let $K$ be the induced cycle bounding the outer face of $G$.
By Lemma~\ref{lemma:stable-g5}, $\vec{\delta}$ lifts to an $(L_1,L_2)$-trajectory $\vec{\psi}$ in $K$ starting with
$\varphi_0\restriction V(K)$ and stable outside of $H$; let $(\pi_1,\pi_2)$ be a corresponding witness,
where $\pi_2=\pi_2^H$.

Let $G'=G-(V(K)\setminus V(H))$.  Let $L'_1$ and $L'_2$ be list assignments on $G'$ matching $L_1$ on $K$ and such that
for each vertex $u\in V(G)\setminus V(K)$,
$$L'_1(u)=L_1(u)\setminus \bigcup_{v\in N(u)\cap V(K)\setminus V(H)} \{\varphi_0(v),\psi_1(v)\}$$
and
$$L'_2(u)=L_2(u)\setminus \bigcup_{v\in N(u)\cap V(K)\setminus V(H)} \{\psi_1(v)\}.$$
By Lemma~\ref{lemma:few-g5}, the vertex $u$ has at most one neighbor in $K-V(H)$, and thus
$|L'_1(u)|,|L'_2(u)|\ge 5$.  Moreover, if $u$ has a neighbor in $V(K)\setminus V(H)$,
then it is not adjacent to the ends of $H$ by the same lemma,
and thus the scene $S'=(G',H,L'_1,L'_2,\varphi_0\restriction V(G'),\vec{\delta})$ satisfies (Ta) and (Tb).
Finally, $S'$ satisfies (Tc) because (Tc) holds in $S$.  Therefore, $S'$ is a T-valid scene.
By the minimality of $G$, $S'$ has a trajectory $\vec{\theta}$, witnessed by $(\omega_1,\omega_2)$.

For $i\in \{1,2\}$, let $\sigma_i=\omega_i^{G'-V(H)}+\pi_i$.  Note that the choice of the list assignment $L'_1$ implies
the recoloring according to $\omega_1^{G'-V(H)}$ does not give any vertex $u\in V(G)\setminus V(K)$ the color
$\varphi_0(v)$ of any neighbor $v\in V(K)$, and that $\theta_1(u)\neq \psi_1(v)$, so the recoloring according to $\pi_1$ can be
performed.  The choice of $L'_2$ similarly shows that the recoloring according to $\omega_2^{G'-V(H)}$ does not conflict
with $\psi_1$.  The recoloring $\pi_2$ only affects vertices of $H$ and does not conflict with $\theta_2$, since $\omega_2$
performs a once-only sequence of recolorings with the same result.
Therefore, $(\sigma_1,\sigma_2)$ is a witness of a trajectory of $S$, which is a contradiction.
\end{proof}

\section{Eliminating a color}

We now apply Theorems~\ref{thm:9colors} and \ref{thm:6colors} to prove the two cases of
Theorem~\ref{thm:col}.

\begin{proof}[Proof of Theorem~\ref{thm:col}]
Let $L_2(v)=L(v)\setminus f(v)$ for each $v\in V(G)$.
Then $S=(G,\varnothing,L,L_2,\varphi,\varnothing)$ is T-valid if $G$ is triangle-free and valid otherwise.
By Theorems~\ref{thm:9colors} and \ref{thm:6colors}, $S$ has a trajectory $\vec{\psi}$.
A witness of this trajectory gives a sequence of recolorings from $\varphi$ to $\varphi'=\psi_2$,
where $\varphi'(v)\neq f(v)$ for each $v\in V(G)$ by the choice of $L_2$, and each vertex is recolored at most twice.
\end{proof}

Note that while Theorem~\ref{thm:col} works in a general list coloring setting,
we can only apply it to reconfiguration of ordinary proper colorings.  Indeed, in the list coloring setting,
it may not be the case that the same color is available at each vertex of the chosen independent set,
and thus the proof of Lemma~\ref{lemma:freeup} fails.  Nevertheless, we find it likely that Theorems~\ref{thm:main}
and~\ref{thm:trmain} generalize to list coloring.
\begin{conjecture}
Let $G$ be a planar graph, let $L$ be a list assignment for $G$,
and let $\varphi$ and $\varphi'$ be $L$-colorings of $G$.
If either
\begin{itemize}
\item[(a)] $|L(v)|\ge 10$ for every $v\in V(G)$, or
\item[(b)] $G$ is triangle-free and $|L(v)|\ge 7$ for every $v\in V(G)$,
\end{itemize}
then $\varphi$ can be transformed to $\varphi'$ by $O(n)$ recolorings so that
all intermediate colorings are proper $L$-colorings.
\end{conjecture}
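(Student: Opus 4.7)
The obstacle noted by the authors is that the proof of Lemma~\ref{lemma:freeup} freezes an independent set $I$ to a common color $k$, and in the list-coloring setting no single color need be available at every vertex of $I$. My plan is to replace Lemma~\ref{lemma:freeup} with a list analog that freezes $I$ to a vertex-dependent target function $g$, then combine it with Theorem~\ref{thm:col} and a list-coloring analog of Theorem~\ref{thm:bousquet}. The latter analog --- that any two $L$-colorings of a $d$-degenerate graph with $|L(v)|\ge 2d+2$ are joined by an $O(n)$-length recoloring path --- follows by a standard degeneracy-ordering argument and I would take it as given.

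Set $d=5$ in case (a) and $d=3$ in case (b). Using Thomassen's theorem (case (a)) or $3$-degeneracy (case (b)), choose an independent set $I\subseteq V(G)$ with $G-I$ being $(d-1)$-degenerate, together with, for each $v\in I$, a target color $g(v)\in L(v)$. The choice of $g$ (and possibly of $I$) must keep the conflict multiplicity $|\{g(v):v\in N(u)\cap I\}|$ small for every $u\in V(G)\setminus I$ --- at most $2$ in case (a) and at most $1$ in case (b) --- so that the reduced lists $L'(u)=L(u)\setminus\{g(v):v\in N(u)\cap I\}$ satisfy $|L'(u)|\ge 2(d-1)+2$. Assuming one can show that any $L$-coloring can be moved in $O(n)$ recolorings to a coloring matching $g$ on $I$, applying this to $\varphi$ and $\varphi'$ yields colorings $\alpha,\alpha'$ with $\alpha|_I=\alpha'|_I=g$; freezing $I$ at $g$ reduces the task on $G-I$ to list reconfiguration with lists $L'$ on a $(d-1)$-degenerate graph, which is linear-diameter by the list analog of Theorem~\ref{thm:bousquet}. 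Achieving the required conflict-multiplicity bounds may itself require restricting $I$ (e.g., picking $I$ so that every outside vertex has few neighbors in $I$) or partitioning $I$ into colour-consistent classes.

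The main obstacle is the step of showing that any $L$-coloring $\psi$ can be moved in $O(n)$ recolorings to one matching $g$ on $I$. Theorem~\ref{thm:col} supplies the opposite direction --- colorings \emph{avoiding} a prescribed function --- whereas forcing $\psi(v)=g(v)$ at $v\in I$ is equivalent to avoiding the remaining $|L(v)|-1$ colors at such $v$, far more than a single application of Theorem~\ref{thm:col} can provide. I would attack this by strengthening Theorem~\ref{thm:col} to a set-valued version, in which each vertex is assigned a bounded-size set of forbidden colors, at the cost of inflating the list-size hypotheses; the scene-based proofs of Sections~4--5 should adapt by inflating the validity conditions (Ga)--(Gc) and (Ta)--(Tc), and perhaps by carrying a longer trajectory $\vec\delta$ that eliminates one forbidden color per recoloring round. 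Reaching $\psi|_I=g$ would then proceed in $O(1)$ iterations of this strengthened theorem, each round narrowing the possible values of $\psi$ on $I$ by a bounded amount. Arranging this iteration compatibly with the validity conditions --- while simultaneously selecting $I$ and $g$ so that the final conflict multiplicity on outside vertices stays within the $|L'(u)|\ge 2(d-1)+2$ budget --- is what I expect to be the real technical heart of the argument, and is presumably why the authors have stated only the conjecture.
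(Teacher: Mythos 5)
The statement you are addressing is Conjecture~1 of the paper, which the authors explicitly leave open; they even explain why the reduction that works for ordinary colorings breaks down, namely that Lemma~\ref{lemma:freeup} freezes an independent set to a single common color and no such color need exist in the list setting. You have correctly identified this obstruction, and your plan of freezing $I$ to a vertex-dependent target $g$ rather than a common color is the natural first move. But the proposal, as you yourself acknowledge in the last paragraph, is a research plan rather than a proof: its two central steps are asserted, not established, and each of them hides a serious difficulty.

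First, reaching $\psi\restriction I = g$ from an arbitrary $L$-coloring $\psi$ in $O(n)$ recolorings is a much stronger demand than what Theorem~\ref{thm:col} delivers. Theorem~\ref{thm:col} lets you \emph{avoid} one prescribed color $f(v)$ per vertex; forcing a vertex to a \emph{specific} color $g(v)$ is equivalent to forbidding the other $|L(v)|-1$ colors, and the paper's scene machinery (validity conditions (Ga)--(Gc), (Ta)--(Tc)) has no slack to absorb that many forbidden colors --- the list-size hypotheses would have to grow without bound as the forbidden sets grow, which defeats the point. A ``set-valued'' strengthening may well be possible, but it would require redoing the entire scene analysis of Sections~4 and~5 with heavier bookkeeping, and it is not at all clear that the degeneracy and planarity arguments leave enough room; you should not take this for granted.

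Second, the existence of a pair $(I,g)$ meeting your conflict-multiplicity budget is itself unestablished and looks delicate. In case (a) you need $I$ independent with $G-I$ being $3$-degenerate (note: Thomassen's theorem gives $3$-degenerate, not $4$-degenerate, so your $d=5$ should be $d=4$) \emph{and} every $u\notin I$ to see at most two distinct values among $\{g(v):v\in N(u)\cap I\}$; in case (b), at most one. A vertex outside $I$ can have arbitrarily many neighbors in $I$, and nothing in Thomassen's construction of $I$ controls this. Making the target colors on a vertex's many $I$-neighbors collide so as to cap the count at $2$ (or $1$) requires $g$ to satisfy a global constraint-satisfaction condition that is not obviously feasible, and you give no argument for it. This is exactly the kind of issue the paper's proof sidesteps by working cycle-by-cycle and using Lemma~\ref{lemma:few} to control how many outer-cycle neighbors an internal vertex can have --- that control does not transfer to an arbitrary independent set $I$. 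Until both of these steps are resolved, the proposal does not close the conjecture; it reframes it.
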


As mentioned in~\cite{DF}, it is possible that the number $10$ of colors in the statement of Theorem \ref{thm:main} is not the best possible. Perhaps a more sophisticated multiphase recoloring process might allow one to replace $10$ by a smaller integer and still obtain a linear bound on the diameter of the reconfiguration graph. To be slightly more precise, in such a setting we have a planar graph $G$ and list assignments $L_1, \dots, L_m$ for $G$ (with various well-chosen conditions on the lists). As long as $m$ is fixed, the aim would be to show the existence of an $(L_1, \dots, L_m)$-trajectory in $G$ starting with a $k$-coloring $\varphi_0$ for some $k \in \{7, 8, 9\}$ (that is, a tuple $\vec{\varphi}=(\varphi_0,\varphi_1, \dots, \varphi_m)$
where for $i\in\{1,\dots, m\}$, $\varphi_i$ is an $L_i$-coloring of $G$
obtained from $\varphi_{i-1}$ by a once-only sequence $\sigma_i$ of recolorings), such that $\varphi_m$ uses one less color than $\varphi_0$. Notice that if $\varphi_0$ is a $9$-coloring, then such a result would straightforwardly give a linear bound on the diameter of $R_9(G)$ (by essentially the proof of Theorem \ref{thm:main}). However, as soon as $9$ is replaced by $8$, one can no longer apply Theorem \ref{thm:bousquet} and hence more ideas would be needed.

\section*{Acknowledgments}

Zden\v{e}k Dvo\v{r}\'ak was supported in part by ERC Synergy grant DYNASNET no. 810115.
Carl Feghali was supported by grant 19-21082S of the Czech Science Foundation.

\bibliography{bibliography}{}
\bibliographystyle{abbrv}
 
\end{document}